\documentclass{article}
\usepackage{float}

\usepackage{arxiv}

\usepackage[utf8]{inputenc}
\usepackage[T1]{fontenc}
\usepackage{hyperref}
\usepackage{url}
\usepackage{booktabs}
\usepackage{amsfonts,amssymb}
\usepackage{nicefrac}
\usepackage{microtype}
\usepackage{graphicx}
\usepackage{natbib}
\usepackage{doi}
\usepackage{babel}
\usepackage{amsmath,amsthm,bm}
\usepackage{tikz}
\usepackage{xcolor}
\usepackage{pgfplots}
\pgfplotsset{compat=newest}
\usepackage{enumerate}

\newcommand{\bu}{\boldsymbol{u}}
\newcommand{\bx}{\boldsymbol{x}}
\newcommand{\bxs}{\boldsymbol{x}_{s}}
\newcommand{\bxf}{\boldsymbol{x}_{f}}
\newcommand{\bvf}{\boldsymbol{v}_{f}}

\newtheorem{remark}{Remark}
\newtheorem{theorem}{Theorem}[section]

\title{A variational model of nonlinear poroelasticity}

\author{ \href{https://orcid.org/0000-0002-6603-8840}{\includegraphics[scale=0.06]{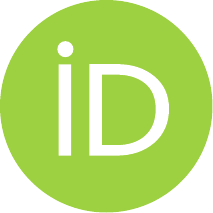}\hspace{1mm}James H.~Adler} \\
	Department of Mathematics\\
	Tufts University\\
	Medford, MA 02155 \\
	\texttt{james.adler@tufts.edu} \\
	\And
	\href{https://orcid.org/0000-0001-7533-0416}{\includegraphics[scale=0.06]{orcid.pdf}\hspace{1mm}Xiaozhe Hu} \\
	Department of Mathematics\\
	Tufts University\\
	Medford, MA 02155 \\
	\texttt{xiaozhe.hu@tufts.edu} \\
	\And
	\href{https://orcid.org/0000-0002-1205-3359}{\includegraphics[scale=0.06]{orcid.pdf}\hspace{1mm}Arkadz Kirshtein}\\
	Department of Mathematics \& Statistics\\
	Texas A\&M University -- Corpus Christi\\
	Corpus Christi, TX 78412 \\
	\texttt{arkadz.kirshtein@tamucc.edu} \\
}

\renewcommand{\shorttitle}{A variational model of nonlinear poroelasticity}

\hypersetup{
pdftitle={A variational model of nonlinear poroelasticity},
pdfsubject={},
pdfauthor={James H. Adler, Xiaozhe Hu, Arkadz Kirshtein},
pdfkeywords={poroelasticity, variational modeling, energy stability, mixed finite elements, nonlinear Biot},
}

\begin{document}
\maketitle

\begin{abstract}
We derive a thermodynamically-consistent model of fluid flow through a
poroelastic medium. Starting from elastic and fluid free-energy densities,
an energy-dissipation rate, and a kinematic constraint, the force-balance
equations are derived using variational principles, with the pressure--density
constitutive relation emerging as a direct consequence of the variational
structure; the same kinematic constraint also supplies the total-flux
transport structure.
In the ideal-gas limit, the model linearization recovers the classical linear Biot equations.
For power-law fluid energies, it yields isentropic pressure--density relations.
A key advantage of the variational formulation is that extensions to richer
physics, such as thermal effects, chemical reactions, or multi-component fluids,
can be incorporated systematically by augmenting the energy and dissipation
functionals without redesigning the force-balance or transport closure.
We support the model with an energy-compatible two-field discretization and study
consolidation under a surface load with three lateral-boundary treatments
and three fluid-compressibility exponents.
\end{abstract}

\keywords{poroelasticity \and variational modeling \and nonlinear Biot \and energy-dissipation principle \and thermodynamic consistency}

\section{Introduction}
Poroelasticity describes coupled deformation and fluid flow in porous media, with foundational applications in geomechanics, biomechanics, and subsurface transport \citep{biot1941general,biot1955theory,coussy2004poromechanics,lewis1998finite}. In particular, classical Biot theory linearizes around a reference state, yielding a well-understood system with constant permeability and a linear storage coefficient. In the Biot model, the motion of fluid in a porous medium and the deformation of the porous medium are governed by Darcy's law and the linear elasticity equation, respectively. It dates back to the one-dimensional work of Terzaghi \citep{terzaghi1943theoretical}, with the three-dimensional model later developed by Biot \citep{biot1941general,biot1955theory}.

The classical Biot equations have been derived using several different approaches. First, Biot's own derivation was based on linear constitutive relations among stress, pore pressure, and fluid content, calibrated against measurable moduli \citep{biot1941general,biot1955theory,biot1962mechanics}. Second, mixture theory derives the macroscopic balance laws by superimposing solid and fluid continua, each governed by its own balance laws \citep{bowen1980incompressible}. \citet{coussy1998frommixture} showed explicitly how the macroscale field equations obtained this way can be recast in terms of the measurable quantities used in Biot's approach, and \citet{steeb2019mechanics} give a derivation of linear poroelasticity from the continuum mixture theory perspective. Third, averaging theory formally averages the pore-scale equations of motion and stress--strain relations over a representative volume; \citet{pride1992deriving} carried this out explicitly for a two-phase fluid/solid isotropic medium and showed the result to be consistent with Biot's equations of motion and stress-strain relations. Finally, the fourth approach, homogenization via the two-scale method, upscales pore-scale elasticity and Stokes flow with appropriate conditions at the solid-fluid boundary, recovering Biot's equations when the dimensionless viscosity of the fluid is small \citep{burridge1981poroelasticity,auriault2009homogenization}.

However, in many settings of practical interest, the fluid is compressible in a nonlinear way, the skeleton undergoes finite deformation, or the poroelastic system is coupled to additional physical processes such as thermal effects, chemical reactions, or multi-component transport. Finite-deformation, fully Eulerian extensions of Biot theory exist \citep{chapelle2014,rohan2017}, but at the cost of substantially more elaborate modeling and discretization. The present work instead pursues the first of these directions, nonlinear fluid compressibility, while keeping the solid in the same small-strain regime as the classical theory. Extending the Biot framework in a thermodynamically consistent manner requires a modeling approach that treats constitutive closure, force balance, and dissipation in a unified way.

The variational energy-dissipation framework \citep{giga2018variational} provides exactly this structure. Starting from energy and dissipation functionals, variation with respect to the solid and fluid degrees of freedom yields the force-balance equations, with thermodynamic consistency guaranteed by construction. This point of view is well established in continuum mechanics and geometric mechanics \citep{marsden1978foundations,gurtin1981continuum} and has been used successfully to derive models for complex fluids, phase-field systems, and other dissipative multiphysics problems.

In this paper, we apply the energy-dissipation framework to nonlinear
poroelasticity. Taking a free-energy density that combines elastic and fluid
contributions, and a dissipation functional for viscous Darcy resistance,
the variational procedure yields a coupled system in the elastic displacement
$\bu$ and fluid density $\rho_f$. The constitutive pressure relation
\[
p(\rho_f)=\rho_f\,\omega_{\rho}(\rho_f)-\omega(\rho_f)
\]
emerges as a direct consequence of the variation; the total-flux transport structure $\boldsymbol{j}=\alpha\rho_f\partial_t\bu + \boldsymbol{q}_{\mathrm{Darcy}}$ follows from the kinematic coupling assumption. In the ideal-gas limit, $\omega=M\rho_f\ln\rho_f$, the model linearization recovers the classical linear Biot equations exactly.  For power-law energies, $\omega=M\rho_f^\gamma$, it gives the isentropic pressure law $p = M(\gamma-1)\rho_f^\gamma$. The solid stays in this small-strain regime throughout, and linear elasticity is sufficient for $\sigma_e$ once a Lagrangian description, described in Section~\ref{sec:model-derivation}, is adopted.  Therefore, the nonlinearity studied here is specifically the fluid's compressibility law, not finite-strain solid mechanics.

Just as the elastic strain energy determines the solid's stress response, the present derivation assigns the fluid its own free-energy density $\omega(\rho_f)$ on the same footing, from which the pressure-density relation follows by variation. Linearizing this energy in the ideal-gas case recovers exactly the Biot modulus $M$ and coupling coefficient $\alpha$ of the classical theory \citep{biot1941general,biot1955theory}. A central virtue of tracking explicitly which relations follow from varying this combined energy, and which follow from the kinematic coupling between the phases, is extensibility. Coupling to thermal effects \citep{deanna2019nonisothermal,liu2018nonisothermal}, reactive transport \citep{wang2020field}, or multi-component fluids \citep{brannick2016dynamics} requires only adding the appropriate free-energy and dissipation terms to the functionals. The force-balance equations are then re-derived by variation, and the same kinematic coupling assumption supplies the corresponding transport closure, guaranteeing that the extended model inherits the same thermodynamic structure. This systematic coupling pathway is a principal motivation for the present work.

The remainder of the paper is organized as follows. Section~\ref{sec:model-derivation} presents the variational derivation and boundary-condition setting. In Section~\ref{sec:twofield-discretization}, we support the model with a compatible two-field discretization in $(\bu,\rho_f)$ that inherits a discrete energy identity. Section~\ref{sec:numerical-experiments} presents numerical experiments that study the model's consolidation behavior across three lateral-boundary treatments (fixed, roller, and free) and three fluid-compressibility exponents $\gamma \in \{1, 2, 5\}$. Finally, Section~\ref{sec:conclusions} gives some concluding remarks and directions for future work.

\section{Model derivation}
\label{sec:model-derivation}

The derivation in this section follows a variational modeling strategy for complex fluids and continua \citep{giga2018variational,marsden1978foundations,gurtin1981continuum}.
Here, we consider the background elastic matrix moving according to
flow map $\boldsymbol{x_{s}}\left(\bx,\,t\right)$ and
fluid, whose motion given by velocity $\bvf$ is described
relative to the elastic matrix. Thus we assume the following constitutive
kinematic relation for the density of the fluid $\rho_{f}:$
\begin{equation}
\partial_{t}\rho_{f}+\nabla\cdot\left(\left(\alpha\partial_{t}\bxs+\bvf\right)\rho_{f}\right)=0.\label{eq:rhof-xs}
\end{equation}
 Constant $\alpha\in\left[0,\,1\right]$ is the coefficient that controls
the slip between elastic matrix and the fluid, where $\alpha=1$ corresponds
to a no-slip case with fluid being carried with the solid. In the small
elastic deformation approximation, we neglect the difference between
Lagrangian coordinates for elastic deformation and Eulerian coordinates.
Consider the following energy dissipation law:
\[
\frac{d}{dt}\int W_{e}\left(\nabla\bxs\right)+\omega\left(\rho_{f}\right)d\bx=-\int\frac{\rho_{f}}{\kappa}\left|\bvf\right|^{2}d\bx,
\]
where $W_{e}$ is the elastic internal energy and $\omega$ is the fluid internal
energy. Defining a fluid flow map $\bxf\left(\xi,\,t\right)$
satisfying $\partial_{t}\bxf\left(\xi,\,t\right)=\bvf\left(\bxf\left(\xi,\,t\right),\,t\right)$,
then using a virtual work principle for the fluid, we assume the variational
relation
\[
\delta\rho_{f}=-\nabla\cdot\left(\left(\alpha\delta\bxs+\delta\bxf\right)\rho_{f}\right),
\]
 and perform the variation of the free energy as follows:
\begin{align}
\delta\mathcal{F}= & ~\delta\int W_{e}\left(\nabla\bxs\right)+\omega\left(\rho_{f}\right)d\bx=\int\frac{\partial W_{e}\left(\nabla\bxs\right)}{\partial\nabla\bxs}:\nabla\delta\bxs+\omega_{\rho}\left(\rho_{f}\right)\delta\rho_{f}d\bx\nonumber \\
= & \int\frac{\partial W_{e}\left(\nabla\bxs\right)}{\partial\nabla\bxs}:\nabla\delta\bxs-\omega_{\rho}\left(\rho_{f}\right)\nabla\cdot\left(\left(\alpha\delta\bxs+\delta\bxf\right)\rho_{f}\right)d\bx\label{eq:variation}\\
= & \int-\left(\nabla\cdot\frac{\partial W_{e}\left(\nabla\bxs\right)}{\partial\nabla\bxs}\right)\cdot\delta\bxs+\left(\rho_{f}\nabla\omega_{\rho}\left(\rho_{f}\right)\right)\cdot\left(\alpha\delta\bxs+\delta\bxf\right)d\bx.\nonumber
\end{align}
 Thus, we write the variational derivatives as
\begin{align*}
\frac{\delta\mathcal{F}}{\delta\bxs}= & -\left(\nabla\cdot\frac{\partial W_{e}\left(\nabla\bxs\right)}{\partial\nabla\bxs}\right)+\alpha\rho_{f}\nabla\omega_{\rho}\left(\rho_{f}\right),\\
\frac{\delta\mathcal{F}}{\delta\bxf}= & \rho_{f}\nabla\omega_{\rho}\left(\rho_{f}\right).
\end{align*}
 Taking variation of the dissipation,
\[
\frac{\delta\mathcal{D}}{\delta\bvf}=\frac{\delta\frac{1}{2}\int\frac{\rho_{f}}{\kappa}\left|\bvf\right|^{2}d\bx}{\delta\bvf}=\frac{\rho_{f}}{\kappa}\bvf,
\]
 and writing the two-component force balance, $\left(\frac{\delta\mathcal{F}}{\delta\bxs},\,\frac{\delta\mathcal{F}}{\delta\bxf}+\frac{\delta\mathcal{D}}{\delta\bvf}\right)=\left(0,\,0\right)$,
together with the constitutive relation \eqref{eq:rhof-xs}, we obtain
the system with 3 unknowns $\left(\bxs,\,\rho_{f},\,\bvf\right)$:
\[
\begin{cases}
-\left(\nabla\cdot\frac{\partial W_{e}\left(\nabla\bxs\right)}{\partial\nabla\bxs}\right)+\alpha\rho_{f}\nabla\omega_{\rho}\left(\rho_{f}\right)=0,\\
\rho_{f}\nabla\omega_{\rho}\left(\rho_{f}\right)+\frac{\rho_{f}}{\kappa}\bvf=0,\\
\partial_{t}\rho_{f}+\nabla\cdot\left(\left(\alpha\partial_{t}\bxs+\bvf\right)\rho_{f}\right)=0.
\end{cases}
\]
Defining the displacement $\bu=\boldsymbol{x_{s}}\left(\bx,\,t\right)-\bx$,
pressure $p=\rho_{f}\omega_{\rho}\left(\rho_{f}\right)-\omega\left(\rho_{f}\right)$, flux $\boldsymbol{q}=\rho_{f}\bvf$, along with the
elastic stress
\[
\sigma_{e}\left(\bu\right)=\frac{\partial W_{e}}{\partial\nabla\bxs}\left(I+\nabla\bu\right),
\]
 we observe that
\[
\nabla p=\rho_{f}\nabla\omega_{\rho}\left(\rho_{f}\right).
\]
These definitions allow us to rewrite the system in terms of unknowns $\left(\bu,\,p,\,\boldsymbol{q}\right)$ as follows:
\begin{equation}
\begin{cases}
-\nabla\cdot\sigma_{e}\left(\bu\right)+\alpha\nabla p=0,\\
\boldsymbol{q}=-\kappa\nabla p,\quad p=\rho_{f}\omega_{\rho}\left(\rho_{f}\right)-\omega\left(\rho_{f}\right)\\
\partial_{t}\rho_{f}+\nabla\cdot\left(\alpha\rho_{f}\partial_{t}\bu\right)+\nabla\cdot\boldsymbol{q}=0.
\end{cases}\label{main-system-eq:}
\end{equation}

Additionally, assuming an internal energy of an ideal gas, $\omega\left(\rho\right)=M\rho\ln\rho$, we
see that $\frac{1}{M}p=\rho_{f}$. Thus, the system becomes
\[
\begin{cases}
-\nabla\cdot\sigma_{e}\left(\bu\right)+\alpha\nabla p=0,\\
\boldsymbol{q}=-\kappa\nabla p,\\
\frac{1}{M}\partial_{t}p+\nabla\cdot\left(\frac{\alpha}{M}p\partial_{t}\bu\right)+\nabla\cdot\boldsymbol{q}=0.
\end{cases}
\]

\subsection{Boundary conditions}

Let the boundary be partitioned as
\[
\partial\Omega=\Gamma_{c}\cup\Gamma_{f}\cup\Gamma_{s},\qquad
\Gamma_{i}\cap\Gamma_{j}=\emptyset\ \text{for}\ i\neq j,
\]
where $\Gamma_{c}$ is clamped, $\Gamma_{f}$ is traction-free, and $\Gamma_{s}$ carries a prescribed traction.
For transport, we consider impermeable walls,
\[
\boldsymbol{j}\cdot\boldsymbol{n}=0\quad\text{on }\partial\Omega,
\]
with total flux $\boldsymbol{j}=\boldsymbol{q}+\alpha\rho_{f}\partial_{t}\bu$.
From integration by parts in the variational derivation, one obtains the natural traction involving
$\omega_{\rho}(\rho_{f})\rho_{f}$. For physical modeling of loading, we instead impose traction through
the total pressure
\[
p=\rho_{f}\omega_{\rho}(\rho_{f})-\omega(\rho_{f}),
\]
which gives
\begin{align*}
\bu &=0, &&\text{on }\Gamma_{c},\\
\left[-\sigma_{e}(\bu)+\alpha pI\right]\cdot\boldsymbol{n} &=0, &&\text{on }\Gamma_{f},\\
\left[-\sigma_{e}(\bu)+\alpha pI\right]\cdot\boldsymbol{n} &=\boldsymbol{g}, &&\text{on }\Gamma_{s}.
\end{align*}
The same partition is used in the numerical discretizations below.

\begin{remark}[Biot limit]
If we consider a linearization (assuming linear elastic stress $\sigma_{e}$)
of the proposed system near the state $\left(\bu,\,p,\,\boldsymbol{q}\right)\approx\left(0,\,M,\,0\right)$ with constant $\alpha$, and note that under the ideal-gas closure, $p=M\rho_f$, this reference pressure
corresponds to a reference density $\rho_f=1$, then we recover the standard Biot system
\[
\begin{cases}
-\nabla\cdot\sigma_{e}\left(\bu\right)+\alpha\nabla p=0,\\
\boldsymbol{q}=-\kappa\nabla p,\\
\frac{1}{M}\partial_{t}p+\alpha\nabla\cdot\left(\partial_{t}\bu\right)+\nabla\cdot\boldsymbol{q}=0,
\end{cases}
\]
which is the classical consolidation form \citep{biot1941general,biot1955theory,coussy2004poromechanics}.
 In addition, when considering incompressibility of the fluid, we arrive
at the constraint on total fluid velocity
\[
\nabla\cdot\left(\alpha\partial_{t}\bu+\frac{\boldsymbol{q}}{\rho_{f}}\right)=0,
\]
which, with assumption $\rho_{f}\left(x,\,0\right)=1=\mathrm{const}$,
also leads to the standard equation in the incompressible Biot system,
\[
\nabla\cdot\left(\alpha\partial_{t}\bu\right)+\nabla\cdot\boldsymbol{q}=0.
\]
\end{remark}

\begin{remark}[power-law fluids]
Considering a fluidic internal energy in form of a power law,
\[
\omega\left(\rho\right)=M\rho^{\gamma},\,\gamma>1,
\]
 then one would obtain an isentropic pressure-density relation,
\[
p=M\left(\gamma-1\right)\rho^{\gamma}.
\]
\end{remark}

\section{Two-field discretization}
\label{sec:twofield-discretization}
To validate the model derived above, we consider a discretization of the system that is compatible with energy and variational principles.  To simplify the discrete model, 
the Darcy law $\boldsymbol{q}=-\kappa\nabla p(\rho_f)$ can be substituted
directly into the mass equation of~\eqref{main-system-eq:}, eliminating
$\boldsymbol{q}$ as a primary unknown.  This yields the two-field system
\begin{equation}
\begin{cases}
-\nabla\cdot\sigma_e(\bu)+\alpha\nabla p(\rho_f)=0,\\[4pt]
\partial_t\rho_f
+\nabla\cdot\!\bigl(\alpha\rho_f\,\partial_t\bu\bigr)
-\kappa\,\nabla\cdot\!\bigl(\nabla p(\rho_f)\bigr)=0,
\end{cases}
\label{eq:2f-strong}
\end{equation}
in the two unknowns $(\bu,\rho_f)$, closed by
$p(\rho_f)=\rho_f\omega_\rho(\rho_f)-\omega(\rho_f)$.
The Darcy flux, $\boldsymbol{q}=-\kappa\nabla p(\rho_f)$, is recovered a
posteriori once $\rho_f$ is known.

The weak variational model is then obtained by multiplying the momentum and mass equations by $\psi_{\bu}$ and $\psi_\rho$, respectively, and then integrating by parts.  Applying 
traction boundary conditions on the momentum equations and the total no-flux condition, $\boldsymbol{j}\cdot\boldsymbol{n}=0$
on all sealed boundaries, yields the weak form, posed over the standard Sobolev
spaces~\citep{evans2010partial},
\begin{equation}
\bigl(\sigma_e(\bu),\nabla\psi_{\bu}\bigr)
+\alpha\bigl(\nabla p(\rho_f),\psi_{\bu}\bigr)
=\bigl(\boldsymbol{g},\psi_{\bu}\bigr)_{\Gamma_s},\quad\forall \psi_{\bu}\in\mathcal{V}_{\bu}\subset \mathbf{H}^1,
\label{eq:2f-weak-u}
\end{equation}
\begin{equation}
\bigl(\partial_t\rho_f,\psi_\rho\bigr)
-\alpha\bigl(\rho_f\,\partial_t\bu,\nabla\psi_\rho\bigr)
+\kappa\bigl(\nabla p(\rho_f),\nabla\psi_\rho\bigr)
=0\quad \forall \psi_{\rho}\in\mathcal{V}_{\rho}\subset H^1.
\label{eq:2f-weak-rho}
\end{equation}
The total no-flux condition is not imposed as an essential (Dirichlet) constraint: it
arises as the natural boundary condition of the weak form~\eqref{eq:2f-weak-rho} and is
automatically satisfied on any boundary where no density Dirichlet condition is applied.
We discretize~\eqref{eq:2f-weak-u}--\eqref{eq:2f-weak-rho} in space
using the Galerkin finite-element method, choosing finite-dimensional subspaces, $\mathcal{V}^h_{\rho}$ and $\mathcal{V}^h_{\bu}$, for $H^1$ and its vector counterpart, $\mathbf{H}^1$.  In particular, we choose piecewise quadratic Lagrange finite elements, $\bm{\mathcal{P}}_2$ for $\mathcal{V}_{\bu}$ and piecewise linear finite elements, $\mathcal{P}_1$ for $\rho_f$.

\subsection{Linearized time discretization}\label{subsec:2f-scheme}

We discretize the system in time using a uniform time step $\delta t$ and set
$t^n=n\,\delta t$ for $n=0,1,\dots,N$; a superscript $n$ denotes the finite-element
approximation of the corresponding field at time $t^n$.
The continuous system~\eqref{eq:2f-strong} is nonlinear in two distinct ways: through
the constitutive pressure $p(\rho_f)$, and through the displacement--density coupling term
$\alpha\rho_f\partial_t\bu$, which is itself a product of the two unknowns. The scheme
constructed below linearizes the first of these exactly, by evaluating the constitutive
pressure gradient using only already-known time-level-$n$ data; the second is left
unlinearized, as discussed in Remark~\ref{rem:coupling-nonlinearity} below. To carry out
this linearization, we define the midpoint value
\[
\bar\rho^{n+\frac12}=\frac{\rho_f^{n+1}+\rho_f^n}{2},
\]
and set $\delta\rho = \rho_f^{n+1} - \rho_f^n$.
The key constitutive term is the midpoint approximation to
$\bar\rho^{n+1/2}\nabla\omega_\rho(\rho_f)$.
Since $\bar\rho^{n+1/2}\nabla\hat\omega_\rho$ is quadratic
in $\rho_f^{n+1}$ ($\hat\omega_\rho$ is the
first-order Taylor approximation of the divided difference), we expand and identify the $O(\delta\rho^2)$ cross-term:
\begin{align*}
\bar\rho^{n+\frac12}\nabla\hat{\omega}_{\rho}
&= \bar\rho^{n+\frac12}\nabla\omega_{\rho}(\rho_{f}^{n})
 + \rho_{f}^{n}\nabla\!\Bigl[\tfrac{1}{2}\omega_{\rho\rho}(\rho_{f}^{n})\,\delta\rho\Bigr]
 + \tfrac{1}{4}\,\delta\rho\,\nabla\!\bigl[\omega_{\rho\rho}(\rho_{f}^{n})\,\delta\rho\bigr].
\end{align*}
The last term is $O(\delta\rho^{2})$.
Dropping it gives the \emph{fully linearized} constitutive gradient,
which is linear in $\rho_f^{n+1}$:
\begin{equation}
\hat{\boldsymbol{d}}^{n+\frac12}
=\bar\rho^{n+\frac12}\nabla\omega_{\rho}(\rho_{f}^{n})
+\rho_{f}^{n}\nabla\!\left[\tfrac{1}{2}\omega_{\rho\rho}(\rho_{f}^{n})
\bigl(\rho_{f}^{n+1}-\rho_{f}^{n}\bigr)\right].
\label{eq:avg-domg}
\end{equation}
The dropped cross-term is $O(\delta\rho^{2})\sim O(\delta t^{2})$ per step,
at the level of the scheme's truncation error.
All individual terms in the density equation are centered at $t^{n+1/2}$,
so the scheme is second-order accurate for the density equation by the
standard midpoint-centering argument.

To stabilize the elastic stress in the momentum equation, we evaluate it as a weighted
combination of $\bu^{n+1}$ and $\bu^{n}$,
\begin{equation}
\sigma_{e,\alpha}\bigl(\bu^{n+1},\bu^{n}\bigr)
=\tfrac{1}{2}\sigma_{e}\!\bigl((1+\alpha_{\mathrm{stab}})\bu^{n+1}
+(1-\alpha_{\mathrm{stab}})\bu^{n}\bigr),
\label{eq:stab-stress}
\end{equation}
parameterized by $\alpha_{\mathrm{stab}}\geq 0$.
At $\alpha_{\mathrm{stab}}=0$ this is the midpoint rule
$\tfrac{1}{2}\sigma_e(\bu^{n+1}+\bu^{n})$;
at $\alpha_{\mathrm{stab}}=1$ it reduces to the fully implicit stress
$\sigma_e(\bu^{n+1})$.
In all numerical experiments we set $\alpha_{\mathrm{stab}}=\delta t$,
so that~\eqref{eq:stab-stress} differs from the midpoint rule by an
$O(\delta t)$ perturbation.

Finally, we arrive at the full discrete system:\\
Given $\left(\bu^{n},\rho_{f}^{n}\right)\in \mathcal{V}_{\bu}\times\mathcal{V}_{\rho}$, find
$\left(\bu^{n+1},\rho_{f}^{n+1}\right)\in\mathcal{V}_{\bu}\times\mathcal{V}_{\rho}$
such that for all test functions $(\psi_{\bu},\psi_{\rho})\in\mathcal{V}_{\bu}\times\mathcal{V}_{\rho}$,
\begin{align}
&\Bigl(\tfrac12\sigma_e\bigl((1+\alpha_{\mathrm{stab}})\bu^{n+1}+(1-\alpha_{\mathrm{stab}})\bu^n\bigr),\nabla\psi_{\bu}\Bigr)
+\alpha\bigl(\hat{\boldsymbol{d}}^{n+\frac12},\psi_{\bu}\bigr)
=\bigl(\boldsymbol{g}^{n+\frac12},\psi_{\bu}\bigr)_{\Gamma_s},\label{eq:2f-Ru}\\
&\bigl(\rho_f^{n+1}-\rho_f^n,\psi_{\rho}\bigr)
+\delta t\,\kappa\bigl(\hat{\boldsymbol{d}}^{n+\frac12},\nabla\psi_{\rho}\bigr)
-\alpha\bigl(\bar\rho^{n+\frac12}(\bu^{n+1}-\bu^n),\nabla\psi_{\rho}\bigr)
= 0.\label{eq:2f-Rrho}
\end{align}
Equation~\eqref{eq:2f-Ru} is linear jointly in $(\bu^{n+1},\rho_f^{n+1})$: the elastic
stress is linear in $\bu^{n+1}$, and $\hat{\boldsymbol{d}}^{n+\frac12}$ is linear in
$\rho_f^{n+1}$ alone (with all other coefficients frozen at time level $n$). In
\eqref{eq:2f-Rrho}, two of the three terms are likewise each linear in $\rho_f^{n+1}$
alone: the \emph{storage} term $\bigl(\rho_f^{n+1}-\rho_f^{n},\psi_\rho\bigr)$, which
accounts for the fluid mass accumulated over the step, and the \emph{Darcy} term
$\delta t\,\kappa\bigl(\hat{\boldsymbol{d}}^{n+\frac12},\nabla\psi_\rho\bigr)$, the
discrete counterpart of $\kappa\bigl(\nabla p(\rho_f),\nabla\psi_\rho\bigr)$
in~\eqref{eq:2f-weak-rho}, with the pressure gradient linearized
as in~\eqref{eq:avg-domg}. The coupling term $\bar\rho^{n+\frac12}(\bu^{n+1}-\bu^n)$,
however, contains the product $\rho_f^{n+1}\bu^{n+1}$ of the two unknowns and is
therefore bilinear in the pair $(\bu^{n+1},\rho_f^{n+1})$ jointly. The assembled
system~\eqref{eq:2f-Ru}--\eqref{eq:2f-Rrho} is consequently not fully linear.

\begin{remark}[Residual nonlinearity from the coupling term]\label{rem:coupling-nonlinearity}
Although the constitutive pressure term $\hat{\boldsymbol{d}}^{n+\frac12}$ is linearized
exactly as described above, the displacement--density coupling term
$\bar\rho^{n+\frac12}(\bu^{n+1}-\bu^n)$ retains the bilinear product
$\rho_f^{n+1}\bu^{n+1}$ and is not linearized. The system~\eqref{eq:2f-Ru}--\eqref{eq:2f-Rrho}
is therefore solved by Newton's method~\citep{kelley1995iterative} rather than a single
direct linear solve. This
residual nonlinearity is mild: across all footing configurations in
Section~\ref{subsec:footing}, Newton's method converges in one or two iterations per
time step, a small overhead compared to the fully nonlinear system that
would result from leaving $\hat{\boldsymbol{d}}^{n+\frac12}$ unlinearized as well.
\end{remark}

\subsection{Discrete energy identity}\label{par:2f-stab-energy}

The continuous model is dissipative by construction (Section~\ref{sec:model-derivation}):
energy decreases monotonically except for work done by external tractions. A
discretization that only approximately preserves this structure can, over many time
steps, either lose energy too fast or manufacture energy it should not have.  This is an
especially serious risk in the stiff, externally forced footing problems of
Section~\ref{sec:numerical-experiments}, run for thousands of time steps. The following
theorem shows that the scheme~\eqref{eq:2f-Ru}--\eqref{eq:2f-Rrho}
inherits the continuous dissipation structure exactly, up to a computable,
higher-order defect, and is therefore unconditionally energy-stable for any $\delta t$.

\begin{theorem}[Discrete energy identity]\label{thm:energy-identity}
At each step $n\geq 0$,
\begin{equation}
\mathcal{E}^{n+1}-\mathcal{E}^{n}
+\delta t\,D_{\mathrm{pred}}^{n}
+\alpha_{\mathrm{stab}}\!\int_\Omega\!W_{e}\bigl(I+\nabla(\bu^{n+1}-\bu^{n})\bigr)\,dx
=\bigl(\boldsymbol{g}^{n+\frac12},\bu^{n+1}-\bu^{n}\bigr)_{\Gamma_s}+\mathrm{Defect}^{n},
\label{eq:disc-2f-stab-energy}
\end{equation}
where
\[
\mathcal{E}^n=\int_\Omega W_{e}(I+\nabla\bu^n)+\omega(\rho_f^n)\,dx,
\qquad
D_{\mathrm{pred}}^{n}=\kappa\!\int_\Omega\frac{\bigl|\hat{\boldsymbol{d}}^{n+\frac12}\bigr|^2}{\bar\rho^{n+\frac12}}\,dx\geq0,
\]
and the defect has the explicit closed form
\begin{equation}
\mathrm{Defect}^{n}
=\int_\Omega \boldsymbol{r}^{n+\frac12}\cdot
\Bigl[\alpha(\bu^{n+1}-\bu^n)-\delta t\,\kappa\,\frac{\hat{\boldsymbol{d}}^{n+\frac12}}{\bar\rho^{n+\frac12}}\Bigr]\,dx.
\label{eq:defect-explicit}
\end{equation}
Here,
\begin{equation}
\boldsymbol{r}^{n+\frac12}
:=\frac{\delta\rho}{4}\,\nabla\!\bigl[\omega_{\rho\rho}(\rho_f^n)\,\delta\rho\bigr]
+\frac{\bar\rho^{n+\frac12}}{6}\,\nabla\!\bigl[\omega_{\rho\rho\rho}(\xi)\,(\delta\rho)^2\bigr],
\label{eq:r-defect}
\end{equation}
with $\xi=\xi(x)$ an intermediate value between $\rho_f^n(x)$ and $\rho_f^{n+1}(x)$
given by Taylor's theorem with Lagrange remainder (assuming $\omega\in C^3$).
Since $\delta\rho=\rho_f^{n+1}-\rho_f^{n}=O(\delta t)$ and $\bu^{n+1}-\bu^n=O(\delta t)$,
$\boldsymbol{r}^{n+\frac12}=O(\delta t^2)$ and $\mathrm{Defect}^n$ is $O(\delta t^3)$ per step.
\end{theorem}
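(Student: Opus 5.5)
The plan is to test the discrete system \eqref{eq:2f-Ru}--\eqref{eq:2f-Rrho} against the discrete increments and add the two resulting identities, in the same way the continuous energy law follows from the force balance. Concretely, I will take $\psi_{\bu}=\bu^{n+1}-\bu^{n}$ in \eqref{eq:2f-Ru} and $\psi_\rho=\omega_\rho^{\ast}$ in \eqref{eq:2f-Rrho}. Here $\omega_\rho^\ast$ is the exact divided difference $(\omega(\rho_f^{n+1})-\omega(\rho_f^n))/\delta\rho$, or, when working formally at the continuous-in-space level, its Taylor representative $\hat\omega_\rho=\omega_\rho(\rho_f^n)+\tfrac12\omega_{\rho\rho}(\rho_f^n)\delta\rho$. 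With this choice, the storage term $(\delta\rho,\omega^\ast_\rho)$ equals $\int\omega(\rho_f^{n+1})-\omega(\rho_f^n)$ exactly.

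\textbf{Elastic part.} Assume linear elasticity, so that $W_e$ is a quadratic form $W_e(I+\nabla\bu)=\tfrac12\sigma_e(\bu):\nabla\bu$. The stress term in \eqref{eq:2f-Ru} then splits as
\[
\tfrac12\bigl(\sigma_e(\bu^{n+1}+\bu^n),\nabla(\bu^{n+1}-\bu^n)\bigr)+\tfrac{\alpha_{\mathrm{stab}}}{2}\bigl(\sigma_e(\bu^{n+1}-\bu^n),\nabla(\bu^{n+1}-\bu^n)\bigr).
\]
The first piece equals $\int W_e(I+\nabla\bu^{n+1})-W_e(I+\nabla\bu^{n})$ by symmetry of $\sigma_e$. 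The second is $\alpha_{\mathrm{stab}}\int W_e(I+\nabla(\bu^{n+1}-\bu^n))$. The right-hand side gives the traction work.

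\textbf{Fluid part.} I will use $\hat\omega_\rho$ as the test function and write $\bar\rho\nabla\hat\omega_\rho=\hat{\boldsymbol d}+\tfrac14\delta\rho\nabla[\omega_{\rho\rho}\delta\rho]$ from the expansion preceding \eqref{eq:avg-domg}. Next I will compare $\hat\omega_\rho$ with the exact divided difference $\omega^\ast_\rho$. By Taylor's theorem with Lagrange remainder, $\omega^\ast_\rho=\hat\omega_\rho+\tfrac16\omega_{\rho\rho\rho}(\xi)\delta\rho^2$. Thus $\bar\rho\nabla\omega^\ast_\rho=\hat{\boldsymbol d}+\boldsymbol r^{n+\frac12}$, which is exactly \eqref{eq:r-defect}. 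Testing \eqref{eq:2f-Rrho} with $\omega^\ast_\rho$ gives three contributions:
\begin{enumerate}[(i)]
\item the storage term yields $\int\omega(\rho^{n+1}_f)-\omega(\rho^n_f)$;
\item the Darcy term is $\delta t\kappa(\hat{\boldsymbol d},\nabla\omega^\ast_\rho)=\delta t\kappa\int \hat{\boldsymbol d}\cdot(\hat{\boldsymbol d}+\boldsymbol r)/\bar\rho$, which gives $\delta t D^n_{\mathrm{pred}}$ plus $\delta t\kappa\int\boldsymbol r\cdot\hat{\boldsymbol d}/\bar\rho$;
\item the coupling term is $-\alpha(\bar\rho(\bu^{n+1}-\bu^n),\nabla\omega^\ast_\rho)=-\alpha(\hat{\boldsymbol d}+\boldsymbol r,\bu^{n+1}-\bu^n)$.
\end{enumerate}

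\textbf{Summation.} Adding the two tested equations, the cross terms $\pm\alpha(\hat{\boldsymbol d},\bu^{n+1}-\bu^n)$ from \eqref{eq:2f-Ru} and (iii) cancel. This cancellation is the discrete analogue of the variational structure $\delta\mathcal F/\delta\bxs\ni\alpha\rho_f\nabla\omega_\rho$. The leftover $\boldsymbol r$-terms are moved to the right-hand side and form $\mathrm{Defect}^n$ in \eqref{eq:defect-explicit}; I will check the signs carefully there. The order estimate then follows from $\delta\rho,\ \bu^{n+1}-\bu^n=O(\delta t)$: $\boldsymbol r$ is quadratic in $\delta\rho$, hence $O(\delta t^2)$, and the bracket in \eqref{eq:defect-explicit} is $O(\delta t)$. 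Together these give $\mathrm{Defect}^n=O(\delta t^3)$.

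\textbf{Main obstacle.} The delicate step is the admissibility of the test function. The divided difference $\omega^\ast_\rho$ and $\hat\omega_\rho$ generally do not lie in $\mathcal V^h_\rho=\mathcal P_1$. I would first handle this by stating the identity at the level where test functions may be taken in $H^1$, i.e.\ the semi-discrete-in-time setting. Alternatively, one can replace $\omega^\ast_\rho$ by its $L^2$ projection and absorb the projection error into the defect. I would also flag the need for $\bar\rho^{n+\frac12}>0$, so that $D_{\mathrm{pred}}$ is well defined and nonnegative, and the assumption that $\sigma_e$ is linear and symmetric, so that the midpoint identity for $W_e$ is exact.
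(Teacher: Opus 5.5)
Your proposal is correct and follows the paper's proof essentially step by step. It uses the same test functions ($\bu^{n+1}-\bu^n$ and the exact divided difference $\tilde\omega_\rho$, your $\omega^\ast_\rho$), the same key decomposition $\bar\rho^{n+\frac12}\nabla\tilde\omega_\rho=\hat{\boldsymbol d}^{n+\frac12}+\boldsymbol r^{n+\frac12}$, the same cancellation of $\pm\alpha(\hat{\boldsymbol d}^{n+\frac12},\bu^{n+1}-\bu^n)$, and the leftover $\boldsymbol r$-terms then give exactly \eqref{eq:defect-explicit}.

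Two small points. First, commit to $\tilde\omega_\rho$ as the test function throughout: testing with $\hat\omega_\rho$ would leave an extra $O(\delta\rho^3)$ storage remainder that is not in the stated closed form. Second, your admissibility caveat (working semi-discretely in time) is a legitimate refinement, not a gap, because the paper's proof takes $\tilde\omega_\rho$ as an admissible test function without comment.
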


\begin{proof}
The proof proceeds by testing the momentum equation~\eqref{eq:2f-Ru} and the density
equation~\eqref{eq:2f-Rrho} with the increment $\bu^{n+1}-\bu^n$ and the exact divided
difference $\tilde\omega_\rho$, respectively, then combining the two resulting
identities.

\paragraph{Momentum test.}
Choose $\psi_{\bu}=\bu^{n+1}-\bu^{n}$ in~\eqref{eq:2f-Ru}.
Using linearity of $\sigma_e$ and the symmetry
$(\sigma_e(\boldsymbol{w}),\nabla\boldsymbol{v})=(\sigma_e(\boldsymbol{v}),\nabla\boldsymbol{w})$,
\begin{align*}
&\tfrac12\bigl(\sigma_e\bigl((1+\alpha_\mathrm{stab})\bu^{n+1}
  +(1-\alpha_\mathrm{stab})\bu^n\bigr),\nabla(\bu^{n+1}-\bu^n)\bigr)\\
&\quad=\underbrace{\tfrac12\bigl(\sigma_e(\bu^{n+1}+\bu^n),
  \nabla(\bu^{n+1}-\bu^n)\bigr)}_{=\,\mathcal{E}_e^{n+1}-\mathcal{E}_e^n}
+\underbrace{\tfrac{\alpha_\mathrm{stab}}{2}\bigl(\sigma_e(\bu^{n+1}-\bu^n),
  \nabla(\bu^{n+1}-\bu^n)\bigr)}_{=\,\alpha_\mathrm{stab}\,\mathcal{S}^n\,\geq\,0}.
\end{align*}
Here $\mathcal{E}_e^n=\int_\Omega W_e(I+\nabla\bu^n)\,dx$;
the identity $\mathcal{E}_e^{n+1}-\mathcal{E}_e^n
=\tfrac12(\sigma_e(\bu^{n+1}+\bu^n),\nabla(\bu^{n+1}-\bu^n))$
follows from $W_e(\bu)=\tfrac12(\sigma_e(\bu),\nabla\bu)$ by bilinearity, and
$\mathcal{S}^n=\int_\Omega W_e(I+\nabla(\bu^{n+1}-\bu^n))\,dx\geq0$
by coercivity of $\sigma_e$.
The full momentum test gives
\begin{equation}
\mathcal{E}_e^{n+1}-\mathcal{E}_e^n
+\alpha_\mathrm{stab}\,\mathcal{S}^n
+\alpha\bigl(\hat{\boldsymbol{d}}^{n+\frac12},\bu^{n+1}-\bu^n\bigr)
=\bigl(\boldsymbol{g}^{n+\frac12},\bu^{n+1}-\bu^n\bigr)_{\Gamma_s}.
\label{eq:mom-test-eq}
\end{equation}

\paragraph{Density test.}
Choose $\psi_\rho=\tilde\omega_\rho$, the exact divided difference of $\omega$
defined by $(\rho_f^{n+1}-\rho_f^{n})\,\tilde\omega_\rho=\omega(\rho_f^{n+1})-\omega(\rho_f^{n})$,
in~\eqref{eq:2f-Rrho}, and process each of the three terms.

\emph{Storage term $(\delta\rho,\tilde\omega_\rho)$.}
By definition of $\tilde\omega_\rho$,
\begin{equation}
(\delta\rho,\tilde\omega_\rho)
=\int_\Omega\bigl[\omega(\rho_f^{n+1})-\omega(\rho_f^n)\bigr]\,dx
=\mathcal{E}_f^{n+1}-\mathcal{E}_f^n,
\quad \mathcal{E}_f^n:=\int_\Omega\omega(\rho_f^n)\,dx,
\label{eq:storage-exact}
\end{equation}
exactly, with no remainder.

\emph{The flux mismatch $\boldsymbol{r}^{n+\frac12}$.}
By Taylor's theorem with Lagrange remainder,
$\tilde\omega_\rho=\omega_\rho(\rho_f^n)+\tfrac12\omega_{\rho\rho}(\rho_f^n)\,\delta\rho
+\tfrac16\omega_{\rho\rho\rho}(\xi)\,(\delta\rho)^2$ exactly, for the same
$\xi=\xi(x)$ as in~\eqref{eq:r-defect}. Writing
$\nabla\tilde\omega_\rho=\nabla\omega_\rho(\rho_f^n)+\tfrac12\nabla[\omega_{\rho\rho}(\rho_f^n)\,\delta\rho]
+\tfrac16\nabla[\omega_{\rho\rho\rho}(\xi)\,(\delta\rho)^2]$
and using $\rho_f^n-\bar\rho^{n+\frac12}=-\tfrac{\delta\rho}{2}$,
the definition~\eqref{eq:avg-domg} gives the key decomposition
\begin{equation}
\hat{\boldsymbol{d}}^{n+\frac12}
=\bar\rho^{n+\frac12}\,\nabla\tilde\omega_\rho-\boldsymbol{r}^{n+\frac12},
\label{eq:d-decomp-eq}
\end{equation}
with $\boldsymbol{r}^{n+\frac12}$ as in~\eqref{eq:r-defect}.

\emph{Dissipation term $\delta t\,\kappa(\hat{\boldsymbol{d}}^{n+\frac12},\nabla\tilde\omega_\rho)$.}
Since $\nabla\tilde\omega_\rho-\hat{\boldsymbol{d}}^{n+\frac12}/\bar\rho^{n+\frac12}
=\boldsymbol{r}^{n+\frac12}/\bar\rho^{n+\frac12}$ by~\eqref{eq:d-decomp-eq},
\begin{equation}
\delta t\,\kappa\bigl(\hat{\boldsymbol{d}}^{n+\frac12},\nabla\tilde\omega_\rho\bigr)
=\delta t\,D_{\mathrm{pred}}^n+D_2^n,
\qquad
D_2^n:=\delta t\,\kappa\Bigl(\hat{\boldsymbol{d}}^{n+\frac12},
\frac{\boldsymbol{r}^{n+\frac12}}{\bar\rho^{n+\frac12}}\Bigr).
\label{eq:D2-eq}
\end{equation}

\emph{Coupling term $-\alpha(\bar\rho^{n+\frac12}(\bu^{n+1}-\bu^n),\nabla\tilde\omega_\rho)$.}
Rearranging the integrand,
$-\alpha(\bar\rho^{n+\frac12}(\bu^{n+1}-\bu^n),\nabla\tilde\omega_\rho)
=-\alpha(\bar\rho^{n+\frac12}\nabla\tilde\omega_\rho,\bu^{n+1}-\bu^n)$.

Collecting all three terms from the density test:
\begin{equation}
\mathcal{E}_f^{n+1}-\mathcal{E}_f^n
+\delta t\,D_{\mathrm{pred}}^n
-\alpha\bigl(\bar\rho^{n+\frac12}\nabla\tilde\omega_\rho,\bu^{n+1}-\bu^n\bigr)
=-D_2^n.
\label{eq:rho-test-eq}
\end{equation}

\paragraph{Coupling cancellation.}
Adding~\eqref{eq:mom-test-eq} and~\eqref{eq:rho-test-eq},
the coupling contributions combine as
$\alpha(\hat{\boldsymbol{d}}^{n+\frac12}-\bar\rho^{n+\frac12}\nabla\tilde\omega_\rho,\,\bu^{n+1}-\bu^n)$.
Using~\eqref{eq:d-decomp-eq}, this residual is
\begin{equation}
D_3^n
:=\alpha\bigl(\hat{\boldsymbol{d}}^{n+\frac12}-\bar\rho^{n+\frac12}\nabla\tilde\omega_\rho,\,
\bu^{n+1}-\bu^n\bigr)
=-\alpha\bigl(\boldsymbol{r}^{n+\frac12},\bu^{n+1}-\bu^n\bigr),
\label{eq:D3-eq}
\end{equation}
which is $O(\delta t^3)$ per step since $\boldsymbol{r}^{n+\frac12}=O(\delta t^2)$
and $\bu^{n+1}-\bu^n=O(\delta t)$.

\paragraph{Final assembly.}
Summing~\eqref{eq:mom-test-eq} and~\eqref{eq:rho-test-eq}
and writing $D_3^n$ for the coupling residual gives
\[
\mathcal{E}^{n+1}-\mathcal{E}^n
+\delta t\,D_{\mathrm{pred}}^n
+\alpha_\mathrm{stab}\,\mathcal{S}^n
+D_2^n+D_3^n
=\bigl(\boldsymbol{g}^{n+\frac12},\bu^{n+1}-\bu^n\bigr)_{\Gamma_s}.
\]
Setting $\mathrm{Defect}^n=-(D_2^n+D_3^n)$ and
expanding via~\eqref{eq:D2-eq}--\eqref{eq:D3-eq}
yields identity~\eqref{eq:disc-2f-stab-energy}
with explicit defect~\eqref{eq:defect-explicit}: since $D_2^n$ and $D_3^n$
both stem from the same flux mismatch $\boldsymbol{r}^{n+\frac12}$, they
combine into the single integral shown there rather than requiring two
separate terms.

Both dissipative terms are non-negative:
$\delta t\,D_{\mathrm{pred}}^n\geq 0$ since $\bar\rho^{n+\frac12}>0$, $\kappa>0$,
and $|\hat{\boldsymbol{d}}^{n+\frac12}|^2\geq0$, and
$\alpha_\mathrm{stab}\,\mathcal{S}^n\geq 0$ by coercivity of $\sigma_e$.
The scheme is therefore unconditionally energy-stable for any $\alpha_\mathrm{stab}\geq 0$.
\end{proof}

\begin{remark}[Stabilization order]
For $\alpha_\mathrm{stab}=\delta t$ the stabilization term is $O(\delta t^3)$ per step,
the same order as $\mathrm{Defect}^n$,
so it does not degrade the cumulative $O(\delta t^2)$ energy accuracy,
but adds physical-like elastic damping proportional to
$\int_\Omega W_{e}(I+\nabla(\bu^{n+1}-\bu^{n}))\,dx$.
\end{remark}

\section{Numerical experiments}
\label{sec:numerical-experiments}

The purpose of the experiments in this section is twofold: (1) to verify that
the two-field discretization reproduces the accuracy and energy-law
behavior proved in Section~\ref{sec:twofield-discretization} on a
representative, analytically tractable configuration, and (2) to demonstrate
the resulting model in a practically relevant application (consolidation under a surface load) across the lateral-boundary
treatments and fluid-compressibility exponents that are the model's
principal new degrees of freedom. A systematic numerical-analysis study of
the discretization itself is left to future work; the goal here is to establish that the
model and its scheme are correct and usable, not to characterize the scheme's
numerical behavior exhaustively.

\subsection{Numerical setup}\label{subsec:numerical-setup}

All experiments use a two-dimensional unit square domain, $(0,1)^{2}$,
discretized by a uniform $K\times K$ mesh of right triangles
(each square cell split diagonally into two triangles),
giving mesh resolution $h=1/K$; the time step $\delta t$ is specified per subsection.
The Biot coupling coefficient and fluid modulus are fixed at $\alpha=1$ and $M=1$
throughout.
Elastic parameters are expressed via Young's modulus $E$ and Poisson's ratio $\nu$:
\[
\mu = \frac{E}{2(1+\nu)},\qquad
\lambda = \frac{E\nu}{(1+\nu)(1-2\nu)}.
\]
Two parameter sets are used across the experiments:
\begin{itemize}
\item \textbf{Benchmark parameters} (accuracy and energy tests): $E=1$, $\nu=0.2$
  ($\mu=5/12\approx0.417$, $\lambda=5/18\approx0.278$), $\kappa=1$.
\item \textbf{Footing parameters} (externally forced problems):
  $E=1000$, $\nu=0.2$
  ($\mu\approx416.7$, $\lambda\approx277.8$), $\kappa=10^{-2}$.
  The higher stiffness and lower permeability place the footing problem
  in an under-drained consolidation regime that is physically more
  demanding and practically more relevant.
\end{itemize}
Both parameter sets use the moderate value $\nu=0.2$, which keeps the
displacement discretization away from the near-incompressible locking that
standard Galerkin finite elements for elasticity can exhibit as $\nu\to0.5$; extending the
method to nearly-incompressible biological or clay-rich geomechanical
materials in that limit is left to future work.

For $(\bu,\rho_f)$ we use $\bm{\mathcal{P}}_2\times\mathcal{P}_1$: vector Lagrange
degree~2 for displacement and continuous Lagrange degree~1 for density, matching the
regularity required by the weak forms~\eqref{eq:2f-weak-u}--\eqref{eq:2f-weak-rho}
($\bu\in\mathbf H^1$, $\rho_f\in H^1$) while keeping the density unknown one polynomial
degree below displacement, consistent with the discretization used throughout
Section~\ref{sec:twofield-discretization}.

The time discretization and linearization are as described in
Section~\ref{subsec:2f-scheme}; boundary conditions are imposed as in
Section~\ref{sec:model-derivation}, with essential (Dirichlet) conditions built into
$\mathcal V_{\bu}$ and $\mathcal V_\rho$ and traction and no-flux conditions entering
naturally through the weak forms.

As discussed in Remark~\ref{rem:coupling-nonlinearity}, the assembled system retains a
mild bilinear nonlinearity through the displacement--density coupling term and is therefore
solved by Newton's method; observed iteration counts are reported in
Section~\ref{subsec:footing}. Each Newton step solves a linear system whose matrix is
the Jacobian of the residuals with respect to the unknowns
$(\bu^{n+1},\rho_f^{n+1})$~\citep{kelley1995iterative}. Writing $\mathcal{R}_{\bu}$ and
$\mathcal{R}_{\rho}$ for the residuals of~\eqref{eq:2f-Ru} and~\eqref{eq:2f-Rrho}, that
Jacobian is the $2\times2$ block matrix
\begin{equation}
\begin{bmatrix}
\dfrac{\partial\mathcal{R}_{\bu}}{\partial\bu^{n+1}} &
\dfrac{\partial\mathcal{R}_{\bu}}{\partial\rho_f^{n+1}}\\[2.2ex]
\dfrac{\partial\mathcal{R}_{\rho}}{\partial\bu^{n+1}} &
\dfrac{\partial\mathcal{R}_{\rho}}{\partial\rho_f^{n+1}}
\end{bmatrix}
=
\begin{bmatrix}
A_{\bu\bu} & A_{\bu\rho}\\[0.6ex]
A_{\rho\bu} & A_{\rho\rho}
\end{bmatrix},
\label{eq:jacobian}
\end{equation}
in which $A_{\bu\bu}$ is an elasticity operator and $A_{\bu\rho}$ the linearized
constitutive term $\hat{\boldsymbol{d}}^{n+\frac12}$ of the momentum residual. The
bilinear coupling term of Remark~\ref{rem:coupling-nonlinearity} contributes to the
remaining two blocks at once: differentiating
$-\alpha\bigl(\bar\rho^{n+\frac12}(\bu^{n+1}-\bu^{n}),\nabla\psi_\rho\bigr)$ with
respect to $\bu^{n+1}$ gives $A_{\rho\bu}$, while differentiating it with respect to
$\rho_f^{n+1}$, through $\bar\rho^{n+\frac12}$, contributes
$-\tfrac{\alpha}{2}\bigl((\bu^{n+1}-\bu^{n})\,\cdot\,,\nabla\psi_\rho\bigr)$ to
$A_{\rho\rho}$. The diagonal block $A_{\rho\rho}$ is therefore not a pure
storage-plus-diffusion operator: it depends on the current displacement iterate, which
is why the Jacobian must be reassembled at every Newton step.
That solve is preconditioned block-diagonally; written with exact block inverses, the
preconditioner is
\[
\mathcal P_{\mathrm{exact}}^{-1}=
\operatorname{diag}\big(A_{\bu\bu}^{-1},\,A_{\rho\rho}^{-1}\big).
\]
The off-diagonal blocks are therefore retained in the Jacobian that Newton's method
uses, but dropped from the preconditioner; the coupling contribution sitting inside
$A_{\rho\rho}$ is kept. Neither diagonal block is inverted exactly in practice: instead,
$A_{\bu\bu}^{-1}$ is approximated by one algebraic-multigrid
V-cycle~\citep{falgout2002hypre,henson2002boomeramg}, and $A_{\rho\rho}^{-1}$ by a
diagonal preconditioner~\citep{saad2003iterative}, which is well conditioned at the
time-step sizes used here. The resulting preconditioner is applied within a GMRES
iteration~\citep{saad1986gmres}.

All discrete systems are assembled and solved using the FEniCSx finite-element
library~\citep{baratta2023dolfinx}, with PETSc~\citep{balay1997petsc} as the linear- and
nonlinear-algebra backend; the block preconditioner above is accessed through PETSc's
field-split interface within its Newton solver.

Accuracy in Section~\ref{subsec:mfld-accuracy} is measured at the final simulation time
$T$ in the quantities the model's own energy structure controls, rather than in a
generic $L^2$ norm. For the displacement, we use the elastic energy norm of the error
$\boldsymbol{e}_{\bu}=\bu^{N}-\bu^{*}(T)$,
\begin{equation}
\|\boldsymbol{e}_{\bu}\|_{E}
:=\Bigl(\int_\Omega\sigma_{e}(\boldsymbol{e}_{\bu}):\nabla\boldsymbol{e}_{\bu}\,dx\Bigr)^{1/2},
\label{eq:err-u}
\end{equation}
the quadratic form associated with the elastic energy whose increments drive
Theorem~\ref{thm:energy-identity}. This is the energy norm in the usual finite-element
sense~\citep{brenner2008mathematical}, induced by the elastic bilinear form
itself, and it is equivalent to the $\mathbf{H}^{1}$ seminorm by coercivity
of $\sigma_{e}$ together with Korn's inequality~\citep{ciarlet1988elasticity}. Measuring the displacement error in $\|\cdot\|_{E}$ therefore reports
how much elastic energy the discretization misplaces, in the same units the stability
result controls. The corresponding quantity for the density is not a norm but the
Bregman divergence~\citep{bregman1967relaxation} generated by the fluid free energy
$\omega$, evaluated on $e_{\rho}=\rho_{f}^{N}-\rho_{f}^{*}(T)$,
\begin{equation}
D_{\omega}\bigl(\rho_{f}^{N}\,\|\,\rho_{f}^{*}\bigr)
=\omega(\rho_{f}^{N})-\omega(\rho_{f}^{*})-\omega_{\rho}(\rho_{f}^{*})\,e_{\rho}
\;\geq\;0,
\label{eq:err-rho}
\end{equation}
nonnegative by convexity of $\omega$ and vanishing only when $\rho_{f}^{N}=\rho_{f}^{*}$.
We report $\bigl(\int_\Omega D_{\omega}\,dx\bigr)^{1/2}$, which for small errors behaves
like $\bigl(\tfrac12\int_\Omega\omega_{\rho\rho}(\rho_{f}^{*})\,e_{\rho}^{2}\,dx\bigr)^{1/2}$,
an $L^{2}$ norm weighted by $1/\rho_{f}^{*}$ under the ideal-gas closure --- the same
weighting carried by $D_{\mathrm{pred}}^{n}$ in~\eqref{eq:disc-2f-stab-energy}.
Quantifying the distance between two states of a system by the divergence generated by
its own convex free energy, rather than by a norm chosen independently of it, is the
standard device of entropy methods for diffusive
equations~\citep{jungel2016entropy}; for the ideal-gas closure
$\omega=M\rho_f\ln\rho_f$ used in this test, $D_{\omega}$ reduces to the relative
entropy of $\rho_{f}^{N}$ with respect to $\rho_{f}^{*}$,
\[
D_{\omega}\bigl(\rho_{f}^{N}\,\|\,\rho_{f}^{*}\bigr)
=M\Bigl[\rho_{f}^{N}\ln\frac{\rho_{f}^{N}}{\rho_{f}^{*}}-\rho_{f}^{N}+\rho_{f}^{*}\Bigr].
\]
Measuring error in these two quantities tests the discretization in precisely the
structure it is designed to preserve: together they are the two halves of the stored
energy $\mathcal{E}$ whose evolution Theorem~\ref{thm:energy-identity} governs.
The two are of different types.  The displacement measure controls a gradient, the
density measure does not.  However,  that asymmetry is inherited from the energy itself,
which is quadratic in $\nabla\bu$ for the solid and pointwise in $\rho_{f}$ for the
fluid; it is not a modelling choice made for the convergence study.
With errors computed at three mesh resolutions (or three time steps), we report the two
successive-refinement rates $\log(e_i/e_{i+1})/\log(h_i/h_{i+1})$ between each
consecutive pair, which is why two rates, not one, are quoted for each field.

\subsection{Accuracy: manufactured solution}\label{subsec:mfld-accuracy}

Consider the simplified system
\[
\begin{cases}
-\nabla\cdot\sigma_{e}\left(\bu\right)+\alpha\nabla p=f_{1}\left(x,\,t\right),\\
\boldsymbol{q}=-\kappa\nabla p,\\
\frac{1}{M}\partial_{t}p+\nabla\cdot\left(\frac{\alpha}{M}p\partial_{t}\bu\right)+\nabla\cdot\boldsymbol{q}=f_{3}\left(x,\,t\right),
\end{cases}
\]
 with
\[
\sigma_{e}\left(\bu\right)=\mu\left(\nabla\bu+\nabla\bu^{T}\right)+\lambda\left(\nabla\cdot\bu\right)I.
\]

Here the goal is to evaluate convergence with a known exact solution that
exercises genuine displacement--density coupling through the term
$\alpha\rho_f\partial_t\bu$, rather than decoupling it by holding $\bu$
stationary. We introduce an extra forcing term and take the exact
solution to be
\begin{align*}
p= & 2-e^{-2\pi^{2}\kappa Mt}\cos\left(\pi x\right)\cos\left(\pi y\right),\\
\bu= & \varepsilon\sin(2\pi t)\,\bu_{0}(x,y),\qquad
\bu_{0}(x,y) = \left[\begin{array}{c}
\sin\left(2\pi x\right)\left(1-\cos\left(2\pi y\right)\right)\\
\sin\left(2\pi y\right)\left(1-\cos\left(2\pi x\right)\right)
\end{array}\right],
\end{align*}
with $\varepsilon=0.1$, so that $\partial_t\bu=\varepsilon\,2\pi\cos(2\pi t)\,\bu_0$
is nonzero throughout the test; since $\cos(2\pi t)\approx1$ at the short
times used below, the coupling term is $O(1)$, not vanishingly small, from
the start of the simulation.
Under the ideal-gas closure $p=M\rho_f$ used for this test, the manufactured
density is $\rho_f^*=p^*/M$.
Then, the force exerted on the structure is calculated accordingly.

 Since $\sigma_e$ is linear, it suffices to compute the elastic stress
generated by the spatial profile $\bu_0$ alone; the momentum forcing is
then this expression scaled by $\varepsilon\sin(2\pi t)$. In the
computation below, $\bu$ denotes $\bu_0$ for brevity. First, we
calculate the elastic stress and its divergence,
\begin{align*}
\sigma_{e}\left(\bu\right)= & \mu\left(\nabla\bu+\nabla\bu^{T}\right)+\lambda\left(\nabla\cdot\bu\right)I\\
= & \mu\left[\begin{array}{cc}
	2\frac{\partial\bu_{1}}{\partial x} &
	\frac{\partial\bu_{1}}{\partial y}+\frac{\partial\bu_{2}}{\partial x}\\
	\frac{\partial\bu_{1}}{\partial y}+\frac{\partial\bu_{2}}{\partial x} &
	2\frac{\partial\bu_{2}}{\partial y}
\end{array}\right]
+\lambda\left(\nabla\cdot\bu\right)I\\
= & 4\pi\mu\left[\begin{array}{cc}
	\cos\left(2\pi x\right)\left(1-\cos\left(2\pi y\right)\right) &
	\sin\left(2\pi x\right)\sin\left(2\pi y\right)\\
	\sin\left(2\pi x\right)\sin\left(2\pi y\right)&
	\cos\left(2\pi y\right)\left(1-\cos\left(2\pi x\right)\right)
\end{array}\right]\\
&+2\pi\lambda\left(\cos\left(2\pi x\right)+\cos\left(2\pi y\right)-2\cos\left(2\pi x\right)\cos\left(2\pi y\right)\right)I,\\[2ex]
	\nabla\cdot \sigma_{e}\left(\bu\right)= & \mu\nabla\cdot\left(\nabla\bu+\nabla\bu^{T}\right)+\lambda\nabla\cdot\left[\left(\nabla\cdot\bu\right)I\right]\\
	= & \mu\left[\begin{array}{c}
		2\frac{\partial^2\bu_{1}}{\partial x^2} +
		\frac{\partial^2\bu_{1}}{\partial y^2}+\frac{\partial^2\bu_{2}}{\partial x\partial y}\\
		\frac{\partial^2\bu_{1}}{\partial x\partial y}+\frac{\partial^2\bu_{2}}{\partial x^2} +
		2\frac{\partial^2\bu_{2}}{\partial y^2}
	\end{array}\right]
	+\lambda\left(\frac{\partial\nabla\cdot\bu}{\partial x}+\frac{\partial\nabla\cdot\bu}{\partial y}\right)\\
	= & \left(8\pi^2\mu+4\pi^2\lambda\right)\left[\begin{array}{cc}
		\sin\left(2\pi x\right)\left(2\cos\left(2\pi y\right)-1\right) \\
		\sin\left(2\pi y\right)\left(2\cos\left(2\pi x\right)-1\right)
	\end{array}\right].
\end{align*}
Then, the gradient pressure term is given by
\[\alpha\nabla p=  \alpha\pi e^{-2\pi^{2}\kappa Mt}
\left[\begin{array}{c}
	\sin\left(\pi x\right)\cos\left(\pi y\right)\\
	\cos\left(\pi x\right)\sin\left(\pi y\right)
\end{array}\right].\]

Since $\bu=\varepsilon\sin(2\pi t)\,\bu_0$ and $\sigma_e$ is linear,
$-\nabla\cdot\sigma_e(\bu)=-\varepsilon\sin(2\pi t)\,\nabla\cdot\sigma_e(\bu_0)$;
combining this with the pressure-gradient term above gives the momentum
forcing
\[
f_1=-\varepsilon\sin(2\pi t)\left(8\pi^{2}\mu+4\pi^{2}\lambda\right)
\left[\begin{array}{c}
	\sin\left(2\pi x\right)\left(2\cos\left(2\pi y\right)-1\right)\\
	\sin\left(2\pi y\right)\left(2\cos\left(2\pi x\right)-1\right)
\end{array}\right]
+\alpha\pi e^{-2\pi^{2}\kappa Mt}
\left[\begin{array}{c}
	\sin\left(\pi x\right)\cos\left(\pi y\right)\\
	\cos\left(\pi x\right)\sin\left(\pi y\right)
\end{array}\right].
\]

On the other hand, the pressure was chosen to be a solution of the heat equation
\[
\frac1M\partial_tp=\kappa\Delta p,
\]
independently of the time profile chosen for $\bu$; the momentum equation
only requires $-\nabla\cdot\sigma_e(\bu)+\alpha\nabla p=f_1$ to hold at each
instant. Being genuinely time-dependent, however, $\bu$ introduces a source
term into the mass equation, since $\partial_t\bu\neq0$:
\[
f_3=\frac{\alpha}{M}\nabla\cdot\!\left(p\,\partial_t\bu\right)
=\frac{\alpha}{M}\,\varepsilon\,2\pi\cos(2\pi t)
\Bigl[p\,\nabla\cdot\bu_0+\nabla p\cdot\bu_0\Bigr].
\]
Both terms in $f_3$ are required: the second, $\nabla p\cdot\bu_0$,
integrates to zero over the domain and so would be invisible to a check of
global mass conservation alone, but it does not vanish pointwise and must
be included for the manufactured solution to be exact.

\begin{remark}[Choice of time step for the spatial convergence test.]
Because $p^*$ satisfies the pure diffusion equation, the characteristic
decay time of the manufactured solution is $T_{\mathrm{decay}}=1/(\kappa\pi^2)$,
which is $\approx0.10$ at the benchmark value $\kappa=1$ used here.
The temporal discretization error for the density is proportional to
$\delta t\times\kappa\times\|\partial_t\rho^*\|$, so to isolate the
spatial error $O(h^2)$ the time step must satisfy
$\delta t\ll h^2/(\kappa\pi^2)$; at the finest mesh used below,
$K=32$ ($h=1/32$), this requires $\delta t\lesssim5\times10^{-5}$.
The value $\delta t=10^{-6}$ used for the spatial convergence experiment is well
inside that bound.
\end{remark}

Figure~\ref{fig:accuracy-2f} reports the results.
Under this genuine displacement--density coupling, the two-field method achieves $O(h^2)$
spatial convergence in both displacement (rates $2.04$, $2.01$ --- the two
successive-refinement rates from the three mesh resolutions, as explained in
Section~\ref{subsec:numerical-setup}) and density (rates $2.02$, $2.00$).
Each field thus attains the optimal rate its space admits in the measure the energy
structure assigns to it: $O(h^2)$ for the $\bm{\mathcal P}_2$ displacement in the
energy norm~\eqref{eq:err-u}, which controls a gradient, and $O(h^2)$ for the
$\mathcal P_1$ density in~\eqref{eq:err-rho}, which does not.
Temporally, at fixed $K=256$ --- chosen so that the spatial error stays well below the
temporal error across the $\delta t$ range shown --- the method exhibits
$O(\delta t^2)$ convergence for both fields (rates $1.99$, $1.89$ for $\bu$; $2.05$,
$2.22$ for $\rho_f$), unaffected by the coupling; errors throughout are measured at the
final time $T$ in the energy quantities defined in
Section~\ref{subsec:numerical-setup}.

\pgfplotsset{
  acc/.style={
    width=0.47\textwidth,height=0.36\textwidth,
    grid=both,grid style={gray!20,line width=0.3pt},
    mark size=2.5pt,thick,
    legend style={font=\small,inner sep=3pt,row sep=-1pt},
    label style={font=\small},tick label style={font=\small},
    title style={font=\small},
  }
}
\begin{figure}[h!]
\centering
\begin{tikzpicture}
\begin{loglogaxis}[acc,
  title={(a) Spatial Error ($\delta t=10^{-6}$)},
  xlabel={$h$},ylabel={energy error},
  xmin=0.02,xmax=0.18,ymin=1.5e-4,ymax=2e-2,
  legend pos=north west,
]
\addplot[blue,mark=o] coordinates
  {(0.125,1.048e-2)(0.0625,2.548e-3)(0.03125,6.314e-4)};
\addlegendentry{$\bu$}
\addplot[red,mark=square] coordinates
  {(0.125,4.521e-3)(0.0625,1.118e-3)(0.03125,2.786e-4)};
\addlegendentry{$\rho_f$}
\addplot[black,dashed,domain=0.025:0.14,samples=2]{0.44*x^2};
\addlegendentry{$O(h^2)$}
\end{loglogaxis}
\end{tikzpicture}
\hfill
\begin{tikzpicture}
\begin{loglogaxis}[acc,
  title={(b) Temporal Error ($K{=}256$)},
  xlabel={$\delta t$},ylabel={energy error},
  xmin=1.5e-3,xmax=1.5e-2,ymin=1.5e-5,ymax=2e-3,
  legend pos=north west,
]
\addplot[blue,mark=o] coordinates
  {(0.01,8.499e-4)(0.005,2.146e-4)(0.0025,5.794e-5)};
\addlegendentry{$\bu$}
\addplot[red,mark=square] coordinates
  {(0.01,5.073e-4)(0.005,1.226e-4)(0.0025,2.629e-5)};
\addlegendentry{$\rho_f$}
\addplot[black,dashed,domain=2e-3:1.2e-2,samples=2]{6.6*x^2};
\addlegendentry{$O(\delta t^2)$}
\end{loglogaxis}
\end{tikzpicture}
\caption{Two-field manufactured-solution accuracy under genuine
  displacement--density coupling ($E=1$, $\nu=0.2$, $M=\alpha=1$;
  $\bu^*=\varepsilon\sin(2\pi t)\,\bu_0(\bx)$ with $\varepsilon=0.1$, so
  $\partial_t\bu^*\neq0$ throughout). Errors are measured at $t=T$ in the energy
  quantities of Section~\ref{subsec:numerical-setup}: the elastic energy
  norm~\eqref{eq:err-u} for $\bu$, and $(\int_\Omega D_\omega\,dx)^{1/2}$
  from~\eqref{eq:err-rho} for $\rho_f$.
  \textbf{(a)}~Spatial convergence, both fields $O(h^2)$, at $\delta t=10^{-6}$,
  $\kappa=1$, $T=5\times10^{-6}$ (chosen so $\delta t\ll h^2/(\kappa\pi^2)$ at
  $K=32$, keeping temporal error below spatial).
  \textbf{(b)}~Temporal convergence ($\alpha_{\mathrm{stab}}=\delta t$), both fields
  $O(\delta t^2)$, at $K=256$, $T=0.1$. See body text for rates and discussion.}
\label{fig:accuracy-2f}
\end{figure}

\subsection{Energy behavior}\label{subsec:energy-2f}

To study the energy behavior of the formulation, we consider boundary conditions where all four walls are clamped ($\bu=\boldsymbol{0}$ on $\partial\Omega$)
and no external forcing is applied (see Figure~\ref{fig:blob-schematic}).
The natural boundary condition enforces
$\boldsymbol{j}\cdot\boldsymbol{n}=0$ (total no-flux) on all walls automatically.
The problem is initialized from a smooth perturbation
with zero initial displacement. The initial fluid density $\rho_f(\bx,0)$
is set via the constitutive inverse $\rho_f=\rho_f(p)$
applied to
\[
\bu(\bx,0)=0,\qquad
p(\bx,0)=1+0.2\sin(\pi x)\sin(\pi y).
\]

\begin{figure}[h!]
\centering
\begin{tikzpicture}[scale=1.0]
\draw[thick] (0,0) rectangle (5,5);
\foreach \x in {0.2,0.6,...,4.8} { \draw[gray] (\x,-0.18) -- (\x+0.22,0.0); }
\foreach \x in {0.2,0.6,...,4.8} { \draw[gray] (\x,5.0)   -- (\x+0.22,5.18); }
\foreach \y in {0.2,0.6,...,4.8} { \draw[gray] (-0.18,\y)  -- (0.0,\y+0.22); }
\foreach \y in {0.2,0.6,...,4.8} { \draw[gray] (5.0,\y)    -- (5.18,\y+0.22); }
\fill[gray!25] (2.5,2.5) circle (1.0);
\node at (2.5,2.5) {$\rho_f$-blob};
\node[below=6pt]  at (2.5,0) {$\bu=\boldsymbol{0}$};
\node[below=22pt] at (2.5,0) {$\boldsymbol{j}\cdot\boldsymbol{n}=0$};
\end{tikzpicture}
\caption{Schematic of the energy benchmark: unit square domain,
localized initial fluid density blob, all four walls clamped ($\bu=\boldsymbol{0}$), and
no external forcing. The total-flux no-penetration condition
$\boldsymbol{j}\cdot\boldsymbol{n}=0$ holds on all walls automatically
as the natural boundary condition of the two-field density equation.}
\label{fig:blob-schematic}
\end{figure}

Figure~\ref{fig:energy-behavior} (left) shows the normalized energy
$\mathcal{E}(t)/\mathcal{E}(0)$ on the blob relaxation problem ($K=32$,
$\delta t=10^{-3}$, $T=0.5$, benchmark parameters).
The energy decays monotonically, reaching equilibrium by $t\approx0.1$
at approximately $96.55\%$ of the initial value.
Beyond decay, the scheme's discrete energy identity should accurately
track the continuous law $\frac{d}{dt}\mathcal{E}=-\mathcal{D}$.
On the sealed blob (no external energy input), the cumulative discrepancy
\[
\mathrm{DEFECT}(T)=\Bigl|\mathcal{E}(T)-\mathcal{E}(0)+\delta t\sum_{n}D_{\mathrm{pred}}^{n}\Bigr|,
\]
where $D_{\mathrm{pred}}^{n}=\kappa\int_\Omega|\hat{\boldsymbol{d}}^{n+1/2}|^2(2/(\rho^{n+1}+\rho^n))\,dx$
is the predicted Darcy dissipation rate, converges to zero at the cumulative
$O(\delta t^2)$ rate implied by the $O(\delta t^3)$-per-step
defect~\eqref{eq:disc-2f-stab-energy}.
Note that $D_{\mathrm{pred}}^n$ is exactly the quantity appearing in the
discrete energy identity itself (see Section~\ref{sec:twofield-discretization}),
so this DEFECT test verifies~\eqref{eq:disc-2f-stab-energy} directly rather
than through a proxy.
Figure~\ref{fig:energy-behavior} (right) confirms the results with rates of $1.86$, $1.94$, $2.01$.

\pgfplotsset{
  energyplot/.style={
    width=0.47\textwidth,height=0.36\textwidth,
    grid=both,grid style={gray!20,line width=0.3pt},
    mark size=2pt,thick,
    legend style={font=\small,inner sep=2pt},
    label style={font=\small},tick label style={font=\scriptsize},
  }
}
\begin{figure}[h!]
\centering
\begin{tikzpicture}
\begin{axis}[energyplot,
  xlabel={$t$},ylabel={$\mathcal{E}(t)/\mathcal{E}(0)$},
  xmin=0,xmax=0.5,ymin=0.962,ymax=1.002,
  legend pos=north east,
]
\addplot[blue,mark=o,mark repeat=5] coordinates{
  (0.000,1.00000000)(0.010,0.98648862)(0.020,0.97881787)(0.030,0.97403835)
  (0.040,0.97099873)(0.050,0.96905240)(0.060,0.96780288)(0.070,0.96699983)
  (0.080,0.96648344)(0.090,0.96615129)(0.100,0.96593764)(0.110,0.96580019)
  (0.120,0.96571177)(0.130,0.96565489)(0.140,0.96561830)(0.150,0.96559476)
  (0.160,0.96557962)(0.170,0.96556988)(0.180,0.96556362)(0.190,0.96555959)
  (0.200,0.96555700)(0.210,0.96555533)(0.220,0.96555426)(0.230,0.96555357)
  (0.240,0.96555313)(0.250,0.96555284)(0.260,0.96555266)(0.270,0.96555254)
  (0.280,0.96555246)(0.290,0.96555241)(0.300,0.96555238)(0.310,0.96555236)
  (0.320,0.96555235)(0.330,0.96555234)(0.340,0.96555234)(0.350,0.96555233)
  (0.360,0.96555233)(0.370,0.96555233)(0.380,0.96555233)(0.390,0.96555233)
  (0.400,0.96555233)(0.410,0.96555233)(0.420,0.96555233)(0.430,0.96555233)
  (0.440,0.96555233)(0.450,0.96555233)(0.460,0.96555233)(0.470,0.96555233)
  (0.480,0.96555233)(0.490,0.96555233)(0.500,0.96555233)};
\addlegendentry{Two-field}
\end{axis}
\end{tikzpicture}
\hfill
\begin{tikzpicture}
\begin{loglogaxis}[energyplot,
  xlabel={$\delta t$},ylabel={DEFECT$(T)$},
  xmin=3e-3,xmax=0.07,ymin=1e-7,ymax=1e-4,
  legend pos=south east,
]
\addplot[blue,mark=o,thick] coordinates{
  (0.05,4.900e-5)(0.02,8.853e-6)(0.01,2.315e-6)(0.005,5.762e-7)};
\addlegendentry{Rates $1.86$, $1.94$, $2.01$}
\addplot[gray,densely dotted,thick,domain=3e-3:6e-2,samples=2]{2.0e-2*x^2};
\addlegendentry{$O(\delta t^{2})$}
\end{loglogaxis}
\end{tikzpicture}
\caption{Energy behavior of the two-field scheme on the sealed blob
  ($E=1$, $\nu=0.2$, $\kappa=1$, all boundaries clamped, no external forcing).
  \textbf{Left}: normalized energy $\mathcal{E}(t)/\mathcal{E}(0)$
  ($K=32$, $\delta t=10^{-3}$, $T=0.5$), decaying monotonically to
  ${\approx}96.55\%$ of its initial value.
  \textbf{Right}: cumulative energy-law discrepancy
  DEFECT$(T)=|\mathcal{E}(T)-\mathcal{E}(0)+\delta t\sum_n D_{\mathrm{pred}}^n|$
  ($K=64$, $T=0.5$), converging at rates $1.86$, $1.94$, $2.01\approx O(\delta t^2)$,
  confirming the $O(\delta t^3)$-per-step identity~\eqref{eq:disc-2f-stab-energy}.}
\label{fig:energy-behavior}
\end{figure}

\subsection{Externally forced footing problems}\label{subsec:footing}

For a practically relevant response, we consider three externally forced footing
setups on the unit square domain. In all three cases, the
bottom boundary is clamped,
\[
\bu=\boldsymbol{0}\quad \text{on }\Gamma_b=\{y=0\},
\]
and the top boundary is subjected to downward pressure loading,
\[
\left[-\sigma_{e}(\bu)+\alpha p(\rho_f)I\right]\cdot\boldsymbol{n}=-p_{\mathrm{load}}(t)\,\mathbf{e}_y
\quad \text{on }\Gamma_t=\{y=1\},
\]
with footing parameters $E=1000$, $\nu=0.2$, $\kappa=10^{-2}$, $M=\alpha=1$.
The load follows a ramp-then-hold profile with maximum $p_{\max}=100$:
\[
p_{\mathrm{load}}(t) = \begin{cases}
p_{\max}\,t/T_{\mathrm{ramp}}, & 0\le t\le T_{\mathrm{ramp}},\\
p_{\max}, & T_{\mathrm{ramp}} < t \le T,
\end{cases}
\]
where $T_{\mathrm{ramp}}=1$ and $T=2$.
The ramp phase drives the system toward a compressed state; the hold phase
allows subsequent consolidation drainage to be observed.
Three values of the fluid energy exponent $\gamma\in\{1,2,5\}$ are compared,
corresponding to an ideal-gas ($p=M\rho_f$), quadratic, and stiff power-law
pressure--density relations. These values are chosen to span a wide range of
constitutive stiffness for methodological demonstration; they are not fit to
any specific real compressible pore fluid.  The difference between the three setups is only in the treatment of the side
boundaries, $\Gamma_L=\{x=0\}$ and $\Gamma_R=\{x=1\}$ (see Figure~\ref{fig:footing-setups}):
\begin{enumerate}[(a)]
\item \textbf{Fixed sides:} $\bu=\boldsymbol{0}$ on $\Gamma_L\cup\Gamma_R$.
\item \textbf{Roller sides:} tangentially free but normal displacement fixed,
			i.e. $\bu\cdot\boldsymbol{n}=0$ on $\Gamma_L\cup\Gamma_R$ and
			$(I-\boldsymbol{n}\otimes\boldsymbol{n})\sigma\boldsymbol{n}=\boldsymbol{0}$.
\item \textbf{Free sides:} traction free,
			$\sigma(\bu,\rho_f)\boldsymbol{n}=\boldsymbol{0}$ on $\Gamma_L\cup\Gamma_R$.
\end{enumerate}

The total no-flux condition $\boldsymbol{j}\cdot\boldsymbol{n}=0$ arises as the
natural boundary condition of the density residual and is automatically satisfied
on all sealed boundaries.
For the footing experiments we apply:
\begin{itemize}
\item \textbf{Bottom and sides}: sealed (natural no-flux condition).
\item \textbf{Top} ($\Gamma_t$, loaded surface): drained,
  $\rho_{f}=\rho_{\mathrm{ref}}$ as a Dirichlet condition
  (no excess pore pressure at the drainage surface; standard consolidation BC).
\end{itemize}

\begin{figure}[h!]
\centering
\begin{minipage}[t]{0.32\textwidth}
\centering
\begin{tikzpicture}[scale=1.0]
\draw[thick] (0,0) rectangle (2.2,2.2);
\draw[->,thick,blue!70!black] (0.4,2.55) -- (0.4,2.25);
\draw[->,thick,blue!70!black] (1.1,2.55) -- (1.1,2.25);
\draw[->,thick,blue!70!black] (1.8,2.55) -- (1.8,2.25);
\foreach \x in {0.1,0.3,...,2.1} {
	\draw[gray] (\x,-0.14) -- (\x+0.16,0.0);
}
\draw[red!70!black,thick] (-0.18,0) -- (-0.18,2.2);
\draw[red!70!black,thick] (2.38,0) -- (2.38,2.2);
\node at (1.1,-0.35) {clamped bottom};
\node[blue!70!black] at (1.1,2.78) {$p_{\mathrm{load}}\downarrow$};
\node[red!70!black,rotate=90] at (-0.35,1.1) {fixed};
\node[red!70!black,rotate=90] at (2.55,1.1) {fixed};
\end{tikzpicture}

\small (a) Fixed side boundaries
\end{minipage}\hfill
\begin{minipage}[t]{0.32\textwidth}
\centering
\begin{tikzpicture}[scale=1.0]
\draw[thick] (0,0) rectangle (2.2,2.2);
\draw[->,thick,blue!70!black] (0.4,2.55) -- (0.4,2.25);
\draw[->,thick,blue!70!black] (1.1,2.55) -- (1.1,2.25);
\draw[->,thick,blue!70!black] (1.8,2.55) -- (1.8,2.25);
\foreach \x in {0.1,0.3,...,2.1} {
	\draw[gray] (\x,-0.14) -- (\x+0.16,0.0);
}
\draw[red!70!black,thick] (-0.18,0) -- (-0.18,2.2);
\draw[red!70!black,thick] (2.38,0) -- (2.38,2.2);
\draw[->,orange!80!black,thick] (-0.58,1.1) -- (-0.27,1.1);
\draw[->,orange!80!black,thick] (2.78,1.1) -- (2.47,1.1);
\draw[<->,teal!70!black,thick] (0.08,0.45) -- (0.08,1.75);
\draw[<->,teal!70!black,thick] (2.12,0.45) -- (2.12,1.75);
\node at (1.1,-0.35) {clamped bottom};
\node[blue!70!black] at (1.1,2.78) {$p_{\mathrm{load}}\downarrow$};
\node[red!70!black,rotate=90] at (-0.78,1.1) {$u_n=0$};
\node[red!70!black,rotate=90] at (2.98,1.1) {$u_n=0$};
\node[orange!80!black,fill=white,inner sep=1pt] at (1.1,1.42) {normal lock};
\node[teal!70!black,fill=white,inner sep=1pt] at (1.1,0.86) {tangentially free};
\end{tikzpicture}

\small (b) Roller side boundaries
\end{minipage}\hfill
\begin{minipage}[t]{0.32\textwidth}
\centering
\begin{tikzpicture}[scale=1.0]
\draw[thick] (0,0) rectangle (2.2,2.2);
\draw[->,thick,blue!70!black] (0.4,2.55) -- (0.4,2.25);
\draw[->,thick,blue!70!black] (1.1,2.55) -- (1.1,2.25);
\draw[->,thick,blue!70!black] (1.8,2.55) -- (1.8,2.25);
\foreach \x in {0.1,0.3,...,2.1} {
	\draw[gray] (\x,-0.14) -- (\x+0.16,0.0);
}
\draw[green!60!black,thick,dashed] (-0.18,0) -- (-0.18,2.2);
\draw[green!60!black,thick,dashed] (2.38,0) -- (2.38,2.2);
\node at (1.1,-0.35) {clamped bottom};
\node[blue!70!black] at (1.1,2.78) {$p_{\mathrm{load}}\downarrow$};
\node[green!60!black,rotate=90] at (-0.35,1.1) {free};
\node[green!60!black,rotate=90] at (2.55,1.1) {free};
\end{tikzpicture}

\small (c) Free side boundaries
\end{minipage}
\caption{Footing configurations on $(0,1)^2$ with common bottom clamp
and top downward pressure loading. Side-boundary treatment varies across
the three setups: fixed, roller (normal lock with tangential freedom), and free.}
\label{fig:footing-setups}
\end{figure}

In all three setups, we study the two-field formulation in terms of
displacement response, pore-pressure/density evolution, drainage pattern,
and robustness of nonlinear solves under increasing load intensity.  The comparison focuses on how lateral confinement changes compaction, lateral
bulging, pore-pressure concentration near the loaded boundary, and
post-load consolidation drainage.

The two-field runs completed for all nine configurations ($K=128$, $\delta t=10^{-3}$,
$T=2$, $p_{\max}=100$, $T_{\mathrm{ramp}}=1$) in $1.5$--$1.9$ Newton iterations per
step on average, consistent with the mild bilinear nonlinearity noted in
Remark~\ref{rem:coupling-nonlinearity}.

Figure~\ref{fig:footing-histories} shows the stored energy
$\mathcal{E}(t)-\mathcal{E}(0)$.  During the ramp ($t\in[0,1]$), energy accumulates nonlinearly.
Fixed sides store the least energy (highest lateral confinement);
roller and free sides store progressively more.
Stored energy increases with $\gamma$ (stiffer fluid).
During the hold ($t\in[1,2]$), the system consolidates: energy decreases as elastic
stress relaxes through drainage.
The consolidation is most pronounced for $\gamma=5$, where the stiffer fluid
sustains larger pore pressures that drive stronger Darcy flow; for $\gamma=1$ the
hold-phase change is modest.

\pgfplotsset{
  footplot2/.style={
    width=0.32\textwidth,height=0.27\textwidth,
    grid=both,grid style={gray!20,line width=0.3pt},
    mark size=0pt,thick,
    xlabel={$t$},xmin=0,xmax=2,
    label style={font=\small},tick label style={font=\scriptsize},
    title style={font=\small},
    legend style={font=\scriptsize,inner sep=2pt,row sep=-2pt},
  }
}
\begin{figure}[t]
\centering
\begin{tikzpicture}
\begin{axis}[footplot2,title={(a) $\gamma=1$: stored energy},
  ylabel={$\mathcal{E}-\mathcal{E}_0$},ymin=0,ymax=5.5,legend pos=north west]
\addplot[blue,solid] coordinates{(0,0)(0.2,0.0936)(0.4,0.3579)(0.6,0.7930)(0.8,1.3989)(1.0,2.1756)(1.2,2.1739)(1.4,2.1727)(1.6,2.1717)(1.8,2.1709)(2.0,2.1702)};
\addlegendentry{fixed}
\addplot[red,dashed] coordinates{(0,0)(0.2,0.1976)(0.4,0.7549)(0.6,1.6719)(0.8,2.9485)(1.0,4.5848)(1.2,4.5823)(1.4,4.5804)(1.6,4.5789)(1.8,4.5777)(2.0,4.5766)};
\addlegendentry{roller}
\addplot[green!60!black,dotted,ultra thick] coordinates{(0,0)(0.2,0.2088)(0.4,0.7978)(0.6,1.7671)(0.8,3.1166)(1.0,4.8464)(1.2,4.8446)(1.4,4.8428)(1.6,4.8414)(1.8,4.8401)(2.0,4.8391)};
\addlegendentry{free}
\draw[dashed,gray](axis cs:1,0)--(axis cs:1,5.5);
\end{axis}
\end{tikzpicture}
\hfill
\begin{tikzpicture}
\begin{axis}[footplot2,title={(b) $\gamma=2$: stored energy},
  ylabel={$\mathcal{E}-\mathcal{E}_0$},ymin=0,ymax=5.5,legend pos=north west]
\addplot[blue,solid] coordinates{(0,0)(0.2,0.1011)(0.4,0.3722)(0.6,0.8137)(0.8,1.4255)(1.0,2.2078)(1.2,2.2043)(1.4,2.2017)(1.6,2.1996)(1.8,2.1978)(2.0,2.1962)};
\addlegendentry{fixed}
\addplot[red,dashed] coordinates{(0,0)(0.2,0.2144)(0.4,0.7873)(0.6,1.7191)(0.8,3.0098)(1.0,4.6594)(1.2,4.6544)(1.4,4.6504)(1.6,4.6470)(1.8,4.6441)(2.0,4.6414)};
\addlegendentry{roller}
\addplot[green!60!black,dotted,ultra thick] coordinates{(0,0)(0.2,0.2264)(0.4,0.8319)(0.6,1.8166)(0.8,3.1808)(1.0,4.9244)(1.2,4.9203)(1.4,4.9164)(1.6,4.9130)(1.8,4.9100)(2.0,4.9072)};
\addlegendentry{free}
\draw[dashed,gray](axis cs:1,0)--(axis cs:1,5.5);
\end{axis}
\end{tikzpicture}
\hfill
\begin{tikzpicture}
\begin{axis}[footplot2,title={(c) $\gamma=5$: stored energy},
  ylabel={$\mathcal{E}-\mathcal{E}_0$},ymin=0,ymax=5.5,legend pos=north west]
\addplot[blue,solid] coordinates{(0,0)(0.2,0.1170)(0.4,0.3977)(0.6,0.8452)(0.8,1.4605)(1.0,2.2440)(1.2,2.2294)(1.4,2.2187)(1.6,2.2099)(1.8,2.2023)(2.0,2.1956)};
\addlegendentry{fixed}
\addplot[red,dashed] coordinates{(0,0)(0.2,0.2526)(0.4,0.8487)(0.6,1.7914)(0.8,3.0817)(1.0,4.7203)(1.2,4.6906)(1.4,4.6671)(1.6,4.6475)(1.8,4.6307)(2.0,4.6162)};
\addlegendentry{roller}
\addplot[green!60!black,dotted,ultra thick] coordinates{(0,0)(0.2,0.2662)(0.4,0.8950)(0.6,1.8900)(0.8,3.2521)(1.0,4.9823)(1.2,4.9541)(1.4,4.9300)(1.6,4.9097)(1.8,4.8924)(2.0,4.8775)};
\addlegendentry{free}
\draw[dashed,gray](axis cs:1,0)--(axis cs:1,5.5);
\end{axis}
\end{tikzpicture}

\caption{Two-field footing stored energy ($K=128$, $\delta t=10^{-3}$, $T=2$,
  $E=1000$, $\nu=0.2$, $\kappa=10^{-2}$, $p_{\max}=100$, $T_{\mathrm{ramp}}=1$)
  for $\gamma=1,2,5$;
  fixed (blue solid), roller (red dashed), free (green dotted) side boundaries.
  The gray vertical line marks $T_{\mathrm{ramp}}=1$.
  During the hold phase ($t>1$) the system consolidates: energy dissipates,
  most strongly for $\gamma=5$ (stiffer fluid, larger pore pressures).}
\label{fig:footing-histories}
\end{figure}

\begin{figure}[tb]
\centering
\includegraphics[width=\textwidth]{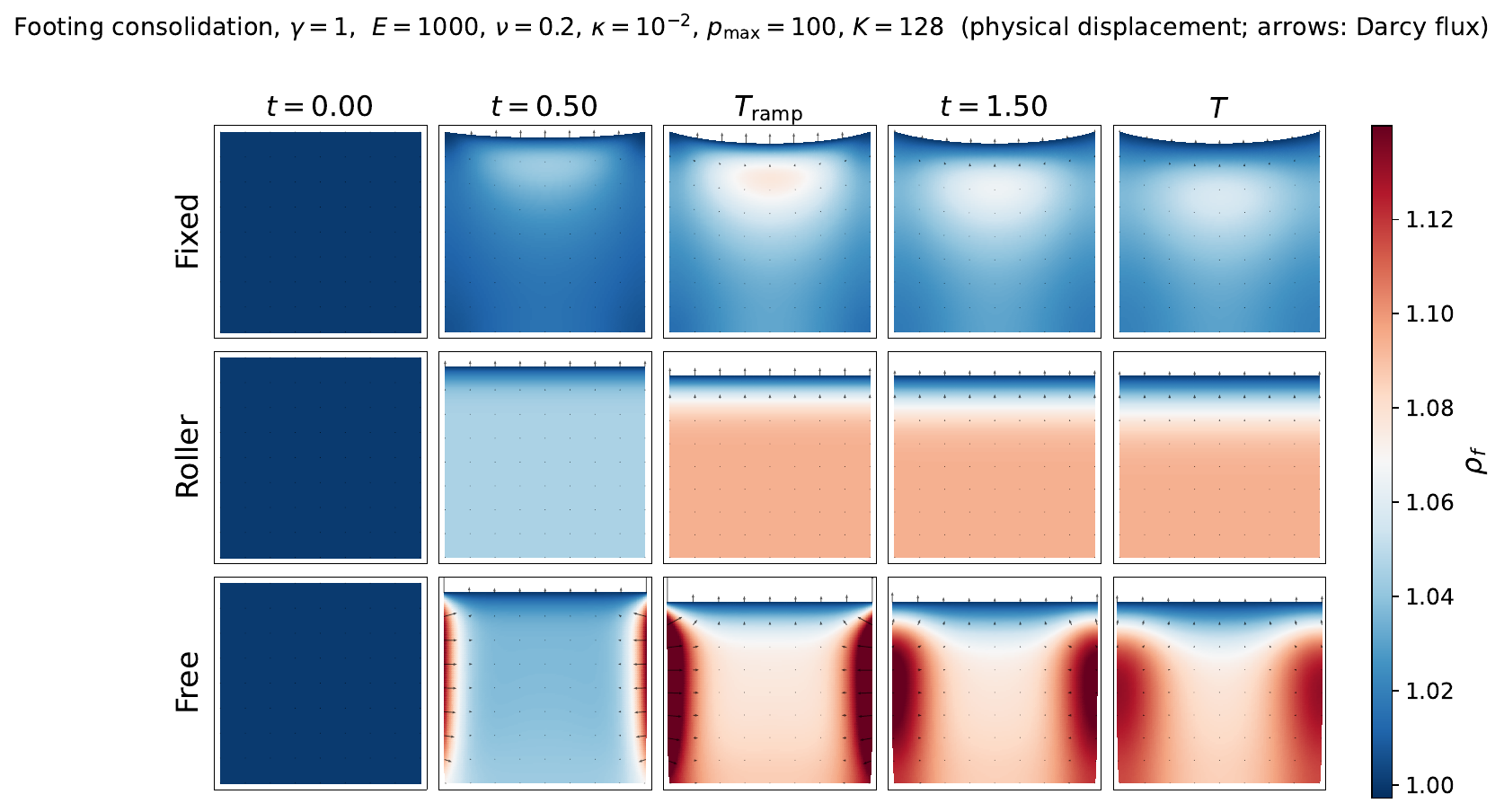}
\caption{Deformed-domain evolution for $\gamma=1$ ($p=M\rho_f$, ideal gas).
  Layout: rows are fixed (top), roller (middle), and free (bottom);
  columns are five equally spaced times $t\in\{0,\,0.5,\,\ldots,\,2\}$,
  with $t=1$ the end of the ramp and $t=2$ the end of the hold.
  Mesh deformed by the physical displacement field $\bu$ (no amplification);
  color is the fluid density $\rho_f$ on a shared scale.
  Parameters: $E=1000$, $\nu=0.2$, $\kappa=10^{-2}$, $M=\alpha=1$,
  $p_{\max}=100$, $T_{\mathrm{ramp}}=1$, $K=128$, $\delta t=10^{-3}$.}
\label{fig:footing-deformed-g1}
\end{figure}

\begin{figure}[tb]
\centering
\includegraphics[width=\textwidth]{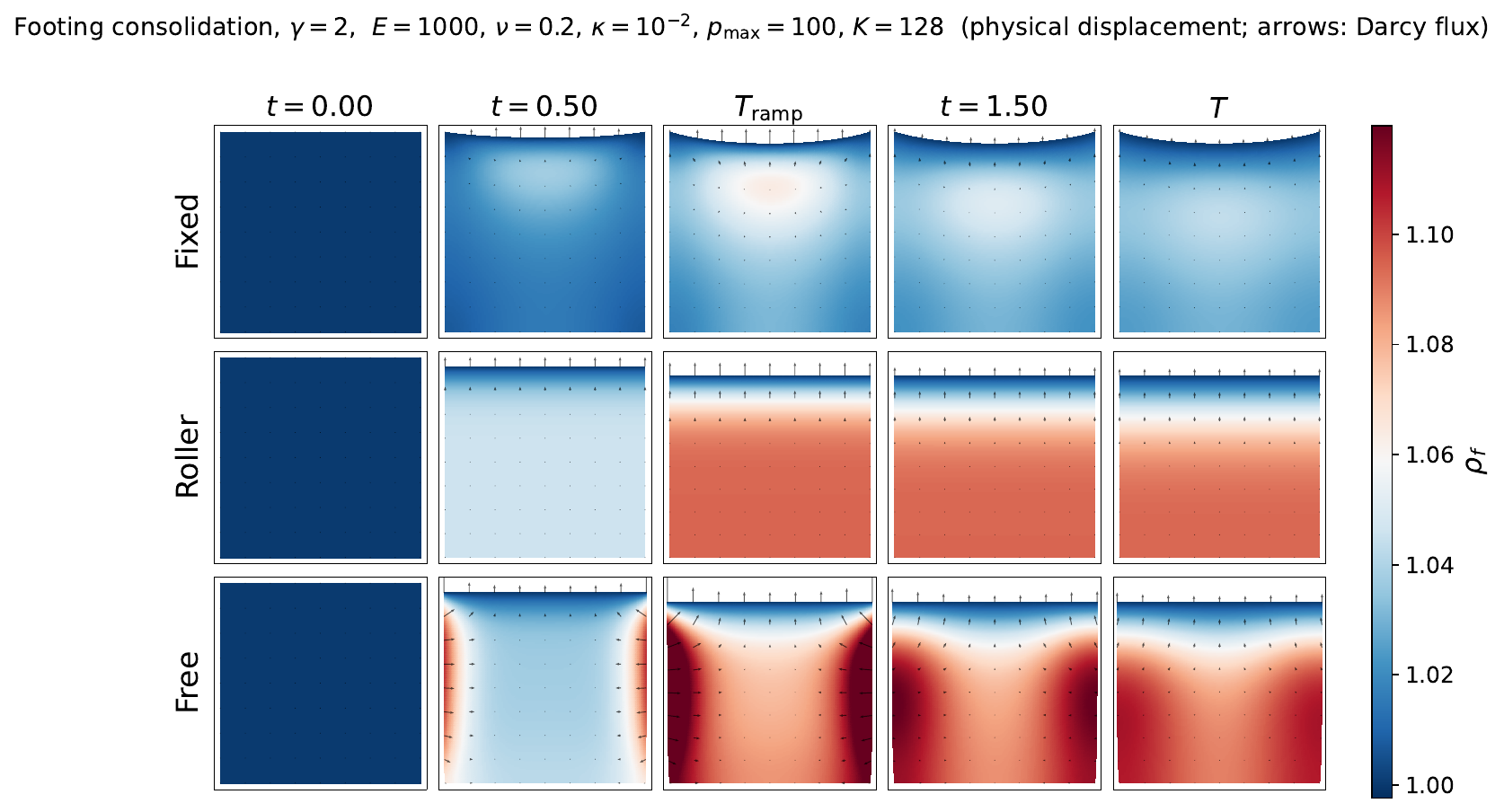}
\caption{Deformed-domain evolution for $\gamma=2$
  ($p=M(\gamma-1)\rho_f^2$, quadratic law).
  Layout and parameters as in Fig.~\ref{fig:footing-deformed-g1}.}
\label{fig:footing-deformed-g2}
\end{figure}

\begin{figure}[tb]
\centering
\includegraphics[width=\textwidth]{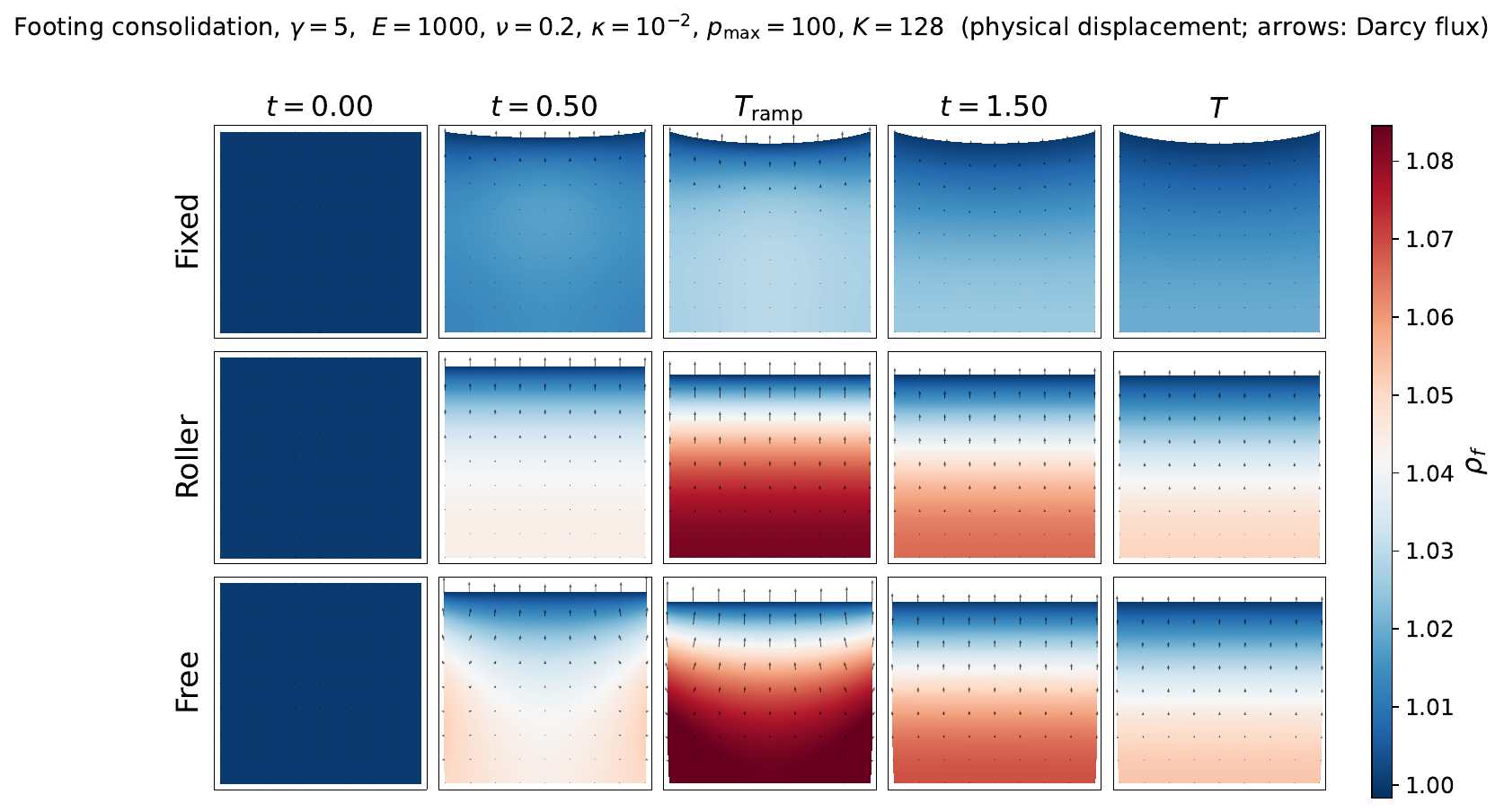}
\caption{Deformed-domain evolution for $\gamma=5$
  ($p=M(\gamma-1)\rho_f^5$, stiff power law).
  Layout and parameters as in Fig.~\ref{fig:footing-deformed-g1}.}
\label{fig:footing-deformed-g5}
\end{figure}

Figures~\ref{fig:footing-deformed-g1}--\ref{fig:footing-deformed-g5} show the
deformed domain, colored by fluid density $\rho_f$, at five equally spaced
times. Displacement is shown at physical scale (no amplification); at
$p_{\max}=100$ it reaches $5$--$10\%$ of the domain size, large enough that
the infinitesimal-strain elasticity used throughout is only an approximation,
adopted here for consistency with the manufactured-solution and energy tests
of Sections~\ref{subsec:mfld-accuracy} and~\ref{subsec:energy-2f} rather than
to model finite-strain behavior. Several features stand out.

\emph{Density range decreases with $\gamma$.}
For $\gamma=1$ the peak excess density reaches $\rho_f-1\approx 23\%$ (free
sides at $t=T_{\mathrm{ramp}}$), decreasing to $\approx16\%$ for $\gamma=2$
and to $\approx9\%$ for $\gamma=5$.
This inverse trend with $\gamma$ reflects the constitutive stiffness
\[
\left.\frac{dp}{d\rho_f}\right|_{\rho_f=1} =
\begin{cases}
M, & \gamma=1\ \text{(ideal gas, }\omega=M\rho\ln\rho,\ p=M\rho_f\text{)},\\[2pt]
M(\gamma-1)\gamma\rho_f^{\gamma-1}, & \gamma>1\ \text{(power law, }\omega=M\rho^\gamma\text{)},
\end{cases}
\]
which equals $1$, $2$, and $20$ at $\rho_f=1$ for $\gamma=1,2,5$ respectively.
A stiffer pressure--density relation requires a larger pore-pressure increment
to produce the same density change, so the fluid density responds less to the
applied load as $\gamma$ increases.

\emph{Lateral confinement concentrates fluid beneath the footing.}
Free sides allow the skeleton to bulge laterally, which compresses the fluid
more uniformly over the domain and drives larger density concentrations
immediately below the loaded surface.
Fixed sides suppress lateral motion, distributing the compression more
uniformly and producing the smallest density peaks.
Roller sides fall between the two: they allow vertical sliding but lock
normal displacement, leading to a density pattern intermediate in magnitude
but more laterally uniform than the free case.

\emph{Hold-phase drainage is controlled by the effective drainage timescale.}
During the hold ($t\in[1,2]$), excess pore pressure relaxes by drainage
through the top boundary.
The characteristic timescale is $\tau = L^2/(\kappa\,dp/d\rho_f)$, where $L=1$
is the domain side length, which evaluates to $\tau=100$, $50$, and $5$ for
$\gamma=1,2,5$ respectively.
Since $T_{\mathrm{hold}}=1$, only a fraction $\sim T_{\mathrm{hold}}/\tau$
of the excess pore pressure can drain during the hold phase.
For $\gamma=1$ and $\gamma=2$ (roller case), the density map is essentially
frozen at its ramp-end value: the data confirm that $\max\rho_f$ stays at
$1.009$ through all hold-phase snapshots for both exponents.
For $\gamma=5$, with $\tau\approx T_{\mathrm{hold}}$, the density visibly
relaxes, decreasing from its peak by roughly $30$--$40\%$ of the excess
by $t=2$, which is consistent with the stronger hold-phase consolidation seen in
Figs.~\ref{fig:footing-histories} and the summary given in Table~\ref{tab:footing-summary}.

\begin{table}[h!]
\centering
\caption{Two-field footing summary ($p_{\max}=100$, $K=128$, $\delta t=10^{-3}$).
  Stored energy $\Delta\mathcal{E}=\mathcal{E}-\mathcal{E}_0$
  and fluid mass change $\Delta\mathcal{M}=\mathcal{M}-\mathcal{M}_0$ (units $10^{-2}$) at end of ramp ($T_{\mathrm{ramp}}=1$)
  and end of hold ($T=2$), showing consolidation during the hold phase.}
\label{tab:footing-summary}
\smallskip
\begin{tabular}{llcccc}
\toprule
& & \multicolumn{2}{c}{$\Delta\mathcal{E}$} & \multicolumn{2}{c}{$\Delta\mathcal{M}$, $\times10^{-2}$} \\
\cmidrule(lr){3-4}\cmidrule(lr){5-6}
$\gamma$ & Side & $t=1$ & $t=2$ & $t=1$ & $t=2$ \\
\midrule
1 & Fixed  & 2.18 & 2.17 & 3.76 & 3.36 \\
1 & Roller & 4.58 & 4.58 & 8.48 & 7.99 \\
1 & Free   & 4.85 & 4.84 & 8.97 & 8.45 \\
\midrule
2 & Fixed  & 2.21 & 2.20 & 3.54 & 3.01 \\
2 & Roller & 4.66 & 4.64 & 8.19 & 7.43 \\
2 & Free   & 4.92 & 4.91 & 8.65 & 7.85 \\
\midrule
5 & Fixed  & 2.24 & 2.20 & 2.35 & 1.31 \\
5 & Roller & 4.72 & 4.62 & 5.88 & 3.32 \\
5 & Free   & 4.98 & 4.88 & 6.18 & 3.49 \\
\bottomrule
\end{tabular}
\end{table}

\section{Conclusions}
\label{sec:conclusions}

In this paper, we have derived a nonlinear poroelastic model from an energy-dissipation
variational principle. The constitutive pressure relation
$p(\rho_f)=\rho_f\omega_\rho-\omega$ emerges directly from variation;
the total-flux transport structure follows from the kinematic coupling
assumption. Thermodynamic consistency is guaranteed by construction, and
extensions to thermal effects, reactive transport, or multi-component fluids
require only augmenting the free-energy and dissipation functionals.

A compatible two-field $(\bu,\rho_f)$ discretization inherits a discrete
energy identity with an $O(\delta t^3)$-per-step defect.
Manufactured-solution tests under genuine displacement--density coupling
confirm second-order convergence in both time and space for both fields,
measured in the energy quantities the model itself controls.
Footing experiments with three lateral-boundary conditions and three
compressibility exponents $\gamma\in\{1,2,5\}$ show that the effective
drainage timescale $\tau=L^2/(\kappa\,p'(\rho_f))$ controls hold-phase
consolidation: negligible for $\gamma=1$, clearly visible for $\gamma=5$.  The two-field variational scheme gives provable $O(\delta t^2)$ energy-law accuracy,
requires only a mildly nonlinear solve (i.e., one or two Newton iterations per time
step in practice, see Remark~\ref{rem:coupling-nonlinearity}), and performs robustly
on physically demanding footing problems in the under-drained regime.

Future directions include a dedicated numerical-analysis study of both the
two-field and three-field discretizations.  This will cover accuracy and
energy-law verification across the full range of fluid-compressibility
exponents, mesh and time-step sensitivity for applications such as the
footing problem, and preconditioner performance.  Additionally, we will study coupled
thermal and reactive extensions via the same variational framework, and
three-dimensional geomechanical applications.

\section*{Code and data availability}
The two-field finite-element solver, the experiment drivers behind every figure and
table in this paper, and the corresponding data are available at
\url{https://github.com/akirshtein/variational-poroelasticity-model}.

\section*{Acknowledgments}
This work was partially supported by the National Science Foundation (NSF) under grant DMS-2208267.

\bibliographystyle{unsrtnat}
\bibliography{references}

@article{pride1992deriving,
	author  = {Pride, Steven R. and Gangi, Anthony F. and Morgan, F. Dale},
	title   = {Deriving the Equations of Motion for Porous Isotropic Media},
	journal = {Journal of the Acoustical Society of America},
	year    = {1992},
	volume  = {92},
	number  = {6},
	pages   = {3278--3290},
	doi     = {10.1121/1.404178}
}

@article{biot1962mechanics,
	author  = {Biot, Maurice A.},
	title   = {Mechanics of Deformation and Acoustic Propagation in Porous Media},
	journal = {Journal of Applied Physics},
	year    = {1962},
	volume  = {33},
	number  = {4},
	pages   = {1482--1498},
	doi     = {10.1063/1.1728759}
}

@article{bowen1980incompressible,
	author  = {Bowen, Ray M.},
	title   = {Incompressible Porous Media Models by Use of the Theory of Mixtures},
	journal = {International Journal of Engineering Science},
	year    = {1980},
	volume  = {18},
	number  = {9},
	pages   = {1129--1148},
	doi     = {10.1016/0020-7225(80)90114-7}
}

@article{burridge1981poroelasticity,
	author  = {Burridge, Robert and Keller, Joseph B.},
	title   = {Poroelasticity Equations Derived from Microstructure},
	journal = {Journal of the Acoustical Society of America},
	year    = {1981},
	volume  = {70},
	number  = {4},
	pages   = {1140--1146},
	doi     = {10.1121/1.386945}
}

@book{terzaghi1943theoretical,
	author    = {Terzaghi, Karl},
	title     = {Theoretical Soil Mechanics},
	publisher = {John Wiley \& Sons},
	address   = {New York, NY},
	year      = {1943}
}

@article{steeb2019mechanics,
	author  = {Steeb, Holger and Renner, J{\"o}rg},
	title   = {Mechanics of Poro-Elastic Media: A Review with Emphasis on
	Foundational State Variables},
	journal = {Transport in Porous Media},
	year    = {2019},
	volume  = {130},
	number  = {2},
	pages   = {437--461},
	doi     = {10.1007/s11242-019-01319-6}
}

@article{coussy1998frommixture,
	author  = {Coussy, Olivier and Dormieux, Luc and Detournay, Emmanuel},
	title   = {From Mixture Theory to {B}iot's Approach for Porous Media},
	journal = {International Journal of Solids and Structures},
	year    = {1998},
	volume  = {35},
	number  = {34--35},
	pages   = {4619--4634},
	doi     = {10.1016/S0020-7683(98)00070-6}
}

@book{auriault2009homogenization,
	author    = {Auriault, Jean-Louis and Boutin, Claude and Geindreau, Christian},
	title     = {Homogenization of Coupled Phenomena in Heterogeneous Media},
	publisher = {ISTE / John Wiley \& Sons},
	address   = {London, UK},
	year      = {2009}
}

@book{giga2018variational,
	publisher={Springer},
	series={Springer Books},
	edition={None},
	booktitle={Handbook of Mathematical Analysis in Mechanics of Viscous Fluids},
	chapter={2},
	author={Mi-Ho Giga and Arkadz Kirshtein and Chun Liu},
	title={Variational Modeling and Complex Fluids},
	year={2018},
	month={March},
	pages={73-113},
	volume={None},
	doi={10.1007/978-3-319-13344-7_2},
}

@book{marsden1978foundations,
  title={Foundations of Mechanics},
  author={Abraham, Ralph and Marsden, Jerrold E.},
  edition={2nd},
  year={1978},
  publisher={Addison-Wesley}
}

@book{gurtin1981continuum,
  title={An Introduction to Continuum Mechanics},
  author={Gurtin, Morton E.},
  year={1981},
  publisher={Academic Press}
}

@article{biot1941general,
  title={General Theory of Three-Dimensional Consolidation},
  author={Biot, Maurice A.},
  journal={Journal of Applied Physics},
  volume={12},
  number={2},
  pages={155--164},
  year={1941},
  doi={10.1063/1.1712886},
  url={https://doi.org/10.1063/1.1712886}
}

@article{biot1955theory,
  title={Theory of Elasticity and Consolidation for a Porous Anisotropic Solid},
  author={Biot, Maurice A.},
  journal={Journal of Applied Physics},
  volume={26},
  number={2},
  pages={182--185},
  year={1955},
  doi={10.1063/1.1721956},
  url={https://doi.org/10.1063/1.1721956}
}

@book{coussy2004poromechanics,
  title={Poromechanics},
  author={Coussy, Olivier},
  year={2004},
  publisher={John Wiley \& Sons},
  isbn={9780470849201},
  url={https://onlinelibrary.wiley.com/doi/book/10.1002/0470092718}
}

@book{lewis1998finite,
  title={The Finite Element Method in the Static and Dynamic Deformation and Consolidation of Porous Media},
  author={Lewis, Roland W. and Schrefler, Bernard A.},
  edition={2nd},
  year={1998},
  publisher={John Wiley \& Sons},
  isbn={9780471978221}
}

@article{deanna2019nonisothermal,
  title={Non-isothermal General {Ericksen--Leslie} System: Derivation, Analysis and Thermodynamic Consistency},
  author={De Anna, Francesco and Liu, Chun},
  journal={Archive for Rational Mechanics and Analysis},
  volume={231},
  number={2},
  pages={637--717},
  year={2019},
  doi={10.1007/s00205-018-1287-4}
}

@article{liu2018nonisothermal,
  title={Non-isothermal electrokinetics: energetic variational approach},
  author={Liu, Pei and Wu, Simo and Liu, Chun},
  journal={Communications in Mathematical Sciences},
  volume={16},
  number={5},
  pages={1451--1463},
  year={2018},
  doi={10.4310/CMS.2018.v16.n5.a13}
}

@article{wang2020field,
  title={Field theory of reaction-diffusion: {Law} of mass action with an energetic variational approach},
  author={Wang, Yiwei and Liu, Chun and Liu, Pei and Eisenberg, Bob},
  journal={Physical Review E},
  volume={102},
  number={6},
  pages={062147},
  year={2020},
  doi={10.1103/PhysRevE.102.062147}
}

@incollection{brannick2016dynamics,
  title={Dynamics of Multi-Component Flows: Diffusive Interface Methods With Energetic Variational Approaches},
  author={Brannick, James and Kirshtein, Arkadz and Liu, Chun},
  booktitle={Reference Module in Materials Science and Materials Engineering},
  publisher={Elsevier},
  year={2016},
  doi={10.1016/B978-0-12-803581-8.03624-9}
}

@article{chapelle2014,
  title={General coupling of porous flows and hyperelastic formulations---{F}rom thermodynamics principles to energy balance and compatible time schemes},
  author={Chapelle, Dominique and Moireau, Philippe},
  journal={European Journal of Mechanics - B/Fluids},
  volume={46},
  pages={82--96},
  year={2014},
  doi={10.1016/j.euromechflu.2014.02.009}
}

@article{rohan2017,
  title={Modelling large-deforming fluid-saturated porous media using an {E}ulerian incremental formulation},
  author={Rohan, Eduard and Luke{\v{s}}, Vladim{\'i}r},
  journal={Advances in Engineering Software},
  volume={113},
  pages={84--95},
  year={2017},
  doi={10.1016/j.advengsoft.2016.11.003}
}

@misc{baratta2023dolfinx,
  title={{DOLFINx}: The next generation {FEniCS} problem solving environment},
  author={Baratta, Igor A. and Dean, Joseph P. and Dokken, J{\o}rgen S. and Habera, Michal and Hale, Jack S. and Richardson, Chris N. and Rognes, Marie E. and Scroggs, Matthew W. and Sime, Nathan and Wells, Garth N.},
  year={2023},
  publisher={Zenodo},
  doi={10.5281/zenodo.10447666}
}

@inproceedings{balay1997petsc,
  author={Balay, Satish and Gropp, William D. and McInnes, Lois Curfman and Smith, Barry F.},
  title={Efficient Management of Parallelism in Object Oriented Numerical Software Libraries},
  booktitle={Modern Software Tools in Scientific Computing},
  editor={Arge, E. and Bruaset, A. M. and Langtangen, H. P.},
  publisher={Birkh{\"a}user Press},
  pages={163--202},
  year={1997}
}

@incollection{falgout2002hypre,
  author={Falgout, Robert D. and Yang, Ulrike Meier},
  title={{hypre}: A Library of High Performance Preconditioners},
  booktitle={Computational Science --- {ICCS} 2002},
  series={Lecture Notes in Computer Science},
  volume={2331},
  pages={632--641},
  publisher={Springer},
  address={Berlin, Heidelberg},
  year={2002},
  doi={10.1007/3-540-47789-6_66}
}

@article{henson2002boomeramg,
  author={Henson, Van Emden and Yang, Ulrike Meier},
  title={{BoomerAMG}: A Parallel Algebraic Multigrid Solver and Preconditioner},
  journal={Applied Numerical Mathematics},
  volume={41},
  number={1},
  pages={155--177},
  year={2002},
  doi={10.1016/S0168-9274(01)00115-5}
}

@book{saad2003iterative,
  author={Saad, Yousef},
  title={Iterative Methods for Sparse Linear Systems},
  edition={2nd},
  publisher={SIAM},
  address={Philadelphia, PA},
  year={2003},
  isbn={0-89871-534-2}
}

@article{saad1986gmres,
  author={Saad, Youcef and Schultz, Martin H.},
  title={{GMRES}: A Generalized Minimal Residual Algorithm for Solving Nonsymmetric Linear Systems},
  journal={SIAM Journal on Scientific and Statistical Computing},
  volume={7},
  number={3},
  pages={856--869},
  year={1986},
  doi={10.1137/0907058}
}

@article{bregman1967relaxation,
  author={Bregman, Lev M.},
  title={The Relaxation Method of Finding the Common Point of Convex Sets and Its Application to the Solution of Problems in Convex Programming},
  journal={USSR Computational Mathematics and Mathematical Physics},
  volume={7},
  number={3},
  pages={200--217},
  year={1967},
  doi={10.1016/0041-5553(67)90040-7}
}

@book{brenner2008mathematical,
  author={Brenner, Susanne C. and Scott, L. Ridgway},
  title={The Mathematical Theory of Finite Element Methods},
  edition={3rd},
  series={Texts in Applied Mathematics},
  volume={15},
  publisher={Springer},
  address={New York, NY},
  year={2008},
  isbn={978-0-387-75933-3},
  doi={10.1007/978-0-387-75934-0}
}

@book{kelley1995iterative,
  author={Kelley, C. T.},
  title={Iterative Methods for Linear and Nonlinear Equations},
  series={Frontiers in Applied Mathematics},
  volume={16},
  publisher={SIAM},
  address={Philadelphia, PA},
  year={1995},
  isbn={0-89871-352-8}
}

@book{evans2010partial,
  author={Evans, Lawrence C.},
  title={Partial Differential Equations},
  edition={2nd},
  series={Graduate Studies in Mathematics},
  volume={19},
  publisher={American Mathematical Society},
  address={Providence, RI},
  year={2010},
  isbn={978-0-8218-4974-3}
}

@book{ciarlet1988elasticity,
  author={Ciarlet, Philippe G.},
  title={Mathematical Elasticity. Volume {I}: Three-Dimensional Elasticity},
  series={Studies in Mathematics and its Applications},
  volume={20},
  publisher={North-Holland},
  address={Amsterdam},
  year={1988},
  isbn={0-444-70259-8}
}

@book{jungel2016entropy,
  author={J{\"u}ngel, Ansgar},
  title={Entropy Methods for Diffusive Partial Differential Equations},
  series={SpringerBriefs in Mathematics},
  publisher={Springer},
  address={Cham},
  year={2016},
  doi={10.1007/978-3-319-34219-1}
}

\end{document}